\newcommand{\N}{\mathbb{N}}
\newcommand{\R}{\mathbb{R}}
\newcommand{\Q}{\mathbb{Q}}
\newcommand{\Z}{\mathbb{Z}}
\newcommand{\C}{\mathbb{C}}
\newcommand{\I}{\mathds{1}}
\newcommand{\rst}[1]{\ensuremath{{\mathbin\mid}\raise-.5ex\hbox{$#1$}}}
\newcommand{\lie}{\mathfrak{g}}
\newcommand{\lien}{\mathfrak{n}}
\DeclareMathOperator{\GL}{GL}
\DeclareMathOperator{\Aut}{Aut}
\DeclareMathOperator{\Aff}{Aff}
\DeclareMathOperator{\lat}{lat}
\author{Karel Dekimpe and Jonas Der\'e\thanks{The second author is supported by a Ph.D.~fellowship of the Research Foundation -- Flanders (FWO).
Research supported by the research Fund of the KU Leuven}
}
\title{\bf Expanding maps and non-trivial self-covers on infra-nilmanifolds}
\date{}
\newtheorem{Def}{Definition}[section]
\newtheorem{Ex}{Example}[section]
\newtheorem{Cor}[Def]{Corollary}
\newtheorem{Thm}[Def]{Theorem}
\newtheorem{Prop}[Def]{Proposition}
\newtheorem{Lem}[Def]{Lemma}
\newtheorem*{Rmk}{Remark}
\newtheorem*{Prop*}{Proposition}
\newtheorem*{Lem*}{Lemma}
\newtheorem*{con}{Conjecture}
\begin{document}
\maketitle
\begin{abstract}
Every expanding map on a closed manifold is topologically conjugate to an expanding map on an infra-nilmanifold, but not every infra-nilmanifold admits an expanding map. In this article we give a complete algebraic characterization of the infra-nilmanifolds admitting an expanding map. We show that, just as in the case of Anosov diffeomorphisms, the existence of an expanding map depends only on the rational holonomy representation of the infra-nilmanifold. A similar characterization is also given for the infra-nilmanifolds with a non-trivial self-cover, which corresponds to determining which almost-Bieberbach groups are co-Hopfian. These results provide many new examples of infra-nilmanifolds without non-trivial self-covers or expanding maps.
\end{abstract}

The study of certain chaotic dynamical systems on closed manifolds like expanding maps or Anosov diffeomorphisms is closely related to the study of infra-nilmanifolds. These manifolds are constructed as quotients of connected and simply connected nilpotent Lie groups by affine transformations. Expanding maps and Anosov diffeomorphisms on these manifolds correspond to certain group morphisms of their fundamental group and this makes it possible to study them in an algebraic way. 

An expanding map on a closed Riemannian manifold $M$ is a differentiable map $f: M \to M$ such that there exists constants $c>0$ and $\lambda > 1$ with $\Vert Df^n(v) \Vert \geq c \lambda^n \Vert v \Vert$ for all $v \in TM$ and $n > 0$. In \cite{grom81-1}, M. Gromov showed that every expanding map on a closed manifold is topologically conjugate to an expanding map on an infra-nilmanifold. Anosov diffeomorphisms are defined in a similar way (see \cite{dv11-1} for an exact definition) and it is conjectured that also every Anosov diffeomorphism on a closed manifold is topologically conjugate to an Anosov diffeomorphism on an infra-nilmanifold. There are some partial results for this conjecture, see for example \cite{mann74-1} and \cite{newh70-1}. This motivates the problem of classifying all infra-nilmanifolds admitting an expanding map or an Anosov diffeomorphism, which was already raised by Smale in \cite{smal67-1}.

The fundamental group of an infra-nilmanifold determines a rational nilpotent Lie algebra and a finite group of automorphisms of this Lie algebra, called the rational holonomy representation. It is well known that the existence of an Anosov diffeomorphism on an infra-nilmanifold depends only on this Lie algebra and rational holonomy representation, see \cite[Theorem A.]{dv09-1}. The proof of this result strongly uses the fact that every Anosov diffeomorphism induces an automorphism on the fundamental group. In this paper, we show that also the existence of an expanding map depends only on the Lie algebra and the rational holonomy representation, see Theorem \ref{expinfra}. By using the same methods we deduce a similar statement for the existence of non-trivial self-covers on infra-nilmanifolds in Theorem \ref{covinfra}. In fact both results show that the existence is equivalent to the existence of certain gradings of the Lie algebra, preserved by every automorphism of the rational holonomy representation. The techniques of this paper give a simplified proof of the result about Anosov diffeomorphisms as well.

Up till now, the only known examples of infra-nilmanifolds without a non-trival self-cover are constructed from nilpotent Lie algebras with only automorphisms of determinant $1$, see \cite{bele03-1}. From the main theorem we deduce a general way of constructing examples and this provides new examples where the corresponding Lie algebra is not of this type. Another consequence is that the existence of an expanding map or a non-trivial self-cover on a nilmanifold is invariant under commensurability of the fundamental group, answering a question of \cite{bele03-1}. Finally, our main results make it possible to determine examples of minimal dimension for low nilpotency classes.

In Section \ref{intro}, we recall some basic facts about infra-nilmanifolds and their fundamental groups, the almost-Bieberbach groups. Next, we study injective group morphisms of these almost-Bieberbach groups in Section \ref{endos} and the relation between graded Lie algebras and expanding automorphisms in Section \ref{SGrad}. These technical results are then combined to prove our main results in Section \ref{algexp} and \ref{alghop}. Finally, Section \ref{Exam} demonstrates how to apply these results.

\section{Rational holonomy representation of an infra-nilmanifold}

\label{intro}

In this section, we briefly recall what an infra-nilmanifold is and how to construct differentiable maps on these manifolds. For more details, we refer to \cite{deki96-1}. Also, we introduce the rational holonomy representation, which plays a crucial role in the formulation of our main theorems.

Let $G$ be a connected and simply connected nilpotent Lie group and $\Aut(G)$ the group of continuous automorphisms of $G$. The affine group $\Aff(G) = G \rtimes \Aut(G)$ acts on $G$ in the following natural way:
$$\forall \alpha =  (g,\delta) \in \Aff(G), \hspace{1mm} \forall h \in G: \hspace{1mm} {}^{\alpha}h = g \delta(h).$$
Let $C$ be a maximal compact subgroup of $\Aut(G)$ and consider the subgroup $G \rtimes C$ of $\Aff(G)$. A discrete, torsion-free subgroup $\Gamma \subseteq G \rtimes C$ with compact quotient $\Gamma \backslash G$ is called an almost-Bieberbach group. The quotient space $\Gamma \backslash G$ is then a closed manifold with fundamental group $\Gamma$ and we call such a manifold an infra-nilmanifold. Denote by $p: \Aff(G) \to \Aut(G)$ the projection on the second component. It is well known that the group $F = p(\Gamma)$ is a finite subgroup of $\Aut(G)$ and we refer to this group as the holonomy group of the infra-nilmanifold $\Gamma \backslash G$. 

A nilmanifold is an infra-nilmanifold with trivial holonomy group or equivalently a quotient space $\Gamma \backslash G$ with $\Gamma \subseteq   G \rtimes \{1\}\approx G$ an uniform lattice of $G$. For a general almost-Bieberbach group $\Gamma$, the subgroup $N = \Gamma \cap G$ is an uniform lattice in $G$ and so every infra-nilmanifold is finitely covered by a nilmanifold, hence its name. The subgroup $N$ is the maximal normal nilpotent subgroup of $\Gamma$ and thus equal to the Fitting subgroup of $\Gamma$. This also shows that every almost-Bieberbach group fits in an exact sequence:
\begin{align}\label{seq}
\xymatrix{1 \ar[r] & N \ar[r]  & \Gamma \ar[r] & F \ar[r] & 1}.
\end{align}
The uniform lattice $N$ is a finitely generated, torsion-free and nilpotent group and every group satisfying these three properties is called an $\mathcal{F}$-group. Conversely, every $\mathcal{F}$-group is isomorphic to an uniform lattice in some connected and simply connected nilpotent Lie group $G$. 

\smallskip

Assume that $\alpha \in \Aff(G)$ is an affine transformation satisfying $\alpha \Gamma \alpha^{-1} \subseteq \Gamma$, then $\alpha$ induces a differentiable map $$\bar{\alpha}: \Gamma \backslash G \to \Gamma \backslash G: g \mapsto {}^\alpha g$$ and this map is called an affine infra-nilendomorphism. If $\alpha \Gamma \alpha^{-1} = \Gamma$, then also $\alpha^{-1}$ induces a differentiable map $\Gamma \backslash G \to \Gamma \backslash G$ which is the inverse of $\bar{\alpha}$. This implies that $\bar{\alpha}$ is a diffeomorphism and in this case we call $\bar{\alpha}$ an affine infra-nilautomorphism. The eigenvalues of an affine infra-nilendomorphism $\bar{\alpha}$ are defined as the eigenvalues of the linear part of $\alpha$. We call an affine infra-nilendomorphism expanding if it only has eigenvalues of absolute value $>1$. With these notations we can formulate the exact statement about expanding maps of Gromov:  

\begin{Thm}[Gromov]
\label{Gromov}
Every expanding map on a closed manifold is topologically conjugate to an expanding affine infra-nilendomorphism.
\end{Thm}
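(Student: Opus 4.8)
The plan is to follow the classical route that combines the differential topology of expanding maps with the large-scale geometry of their fundamental groups; the theorem is due to Gromov and builds on earlier work of Shub and Franks, so I only sketch the strategy. First I would pass to the universal cover. By Shub's analysis an expanding map $f \colon M \to M$ is a covering map of some degree $d > 1$, its universal cover $\tilde M$ is diffeomorphic to $\R^n$, and $f$ lifts to a diffeomorphism $\tilde f$ of $\tilde M$ that expands the pulled-back metric. In particular $M$ is aspherical, hence a $K(\pi,1)$ with $\pi = \pi_1(M)$ torsion-free (it acts freely on a contractible space), and $f$ induces an injective endomorphism $\varphi = f_* \colon \pi \to \pi$ whose image has finite index $d$ in $\pi$.

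The geometric heart of the argument is to turn the expansion of $\tilde f$ into polynomial growth of $\pi$. Fix a basepoint $\tilde x_0 \in \tilde M$ and put $\tilde y_0 = \tilde f(\tilde x_0)$; equivariance gives $\tilde f(\gamma \cdot \tilde x_0) = \varphi(\gamma) \cdot \tilde y_0$ for all $\gamma \in \pi$. Compactness of $M$ bounds $\Vert Df \Vert$ from above, so $\tilde f^{-1}$ is a contraction, $\Vert D\tilde f^{-1} \Vert \le \theta$ for some $\theta < 1$. Let $\beta(R)$ count the orbit points inside the ball of radius $R$; by the \v{S}varc--Milnor lemma $\beta$ is comparable to the word-growth function of $\pi$. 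Since $\varphi(\pi)$ has index $d$, a ball of radius $R$ contains about $\beta(R)/d$ points of the sublattice $\varphi(\pi) \cdot \tilde y_0$, and $\tilde f^{-1}$ carries each $\varphi(\gamma)\cdot \tilde y_0$ back to $\gamma \cdot \tilde x_0$ inside the ball of radius $\theta R$. This yields a recursion $\beta(R) \lesssim d\, \beta(\theta R)$, which iterates to a polynomial bound $\beta(R) \lesssim R^{\log d / \log(1/\theta)}$. Establishing this estimate cleanly is the main obstacle, since it is precisely the dynamical input that forces the algebraic structure.

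Having established polynomial growth, I would invoke Gromov's polynomial growth theorem to conclude that $\pi$ is virtually nilpotent; being torsion-free it is then isomorphic to an almost-Bieberbach group $\Gamma \subseteq G \rtimes C$ for a suitable connected and simply connected nilpotent Lie group $G$, fitting into an exact sequence as in \eqref{seq}. Since both $M$ and the infra-nilmanifold $\Gamma \backslash G$ are closed aspherical manifolds with isomorphic fundamental groups, they are homotopy equivalent, and by the topological rigidity of infra-nilmanifolds (Lee--Raymond) they are homeomorphic; thus $M$ is, topologically, the infra-nilmanifold $\Gamma \backslash G$.

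Finally I would produce the affine model of $f$ and conjugate. Transporting $\varphi$ through the isomorphism $\pi \cong \Gamma$ gives an injective endomorphism of $\Gamma$ with finite-index image; by Mal'cev rigidity it extends to an endomorphism of $G$ compatible with the finite holonomy action, giving an affine transformation $\alpha \in \Aff(G)$ with $\alpha \Gamma \alpha^{-1} \subseteq \Gamma$ and hence an affine infra-nilendomorphism $\bar\alpha$ of $\Gamma \backslash G$ inducing $\varphi$. The contraction of $\tilde f^{-1}$ forces the linear part of $\alpha$ to have all eigenvalues of absolute value $> 1$, so $\bar\alpha$ is expanding. To close, I would appeal to the structural stability of expanding maps (Shub): two expanding maps that induce the same endomorphism of the fundamental group are topologically conjugate, via expansiveness together with the shadowing property. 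Applying this to $f$ and $\bar\alpha$ gives the desired topological conjugacy.
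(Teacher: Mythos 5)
The paper does not contain a proof of this statement: Theorem \ref{Gromov} is quoted as a known deep result of Gromov (\cite{grom81-1}) and used as a black box throughout, so there is no internal argument to compare yours against. Your sketch follows the standard Shub--Franks--Gromov route, and as an outline it is faithful to how the theorem is actually proved in the literature: Shub's results that an expanding map is a self-covering of degree $d>1$ with universal cover diffeomorphic to $\R^n$ (hence $M$ aspherical and $\pi_1(M)$ torsion-free), Franks' argument converting the uniform contraction of $\tilde f^{-1}$ into polynomial growth of $\pi_1(M)$, Gromov's polynomial growth theorem to get virtual nilpotency, the theorem that a finitely generated torsion-free virtually nilpotent group is an almost-Bieberbach group, an affine representative of the induced endomorphism (essentially the paper's Theorem \ref{Bieberbach}), and Shub's conjugacy theorem for expanding maps to close. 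That is the correct skeleton.

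Three points deserve flags. First, you obtain the homeomorphism $M \approx \Gamma \backslash G$ from ``topological rigidity of infra-nilmanifolds (Lee--Raymond)''. Lee--Raymond rigidity compares two infra-nilmanifolds with isomorphic fundamental groups; the statement that an \emph{arbitrary} closed aspherical manifold with almost-Bieberbach fundamental group is homeomorphic to the corresponding infra-nilmanifold is a Borel-conjecture--type result (Farrell--Hsiang, with dimension restrictions) and is not what you want to lean on --- nor do you need to: Shub's conjugacy theorem, which you invoke at the end anyway, applies to two expanding maps on possibly different closed manifolds whose induced endomorphisms are intertwined by an isomorphism of fundamental groups, and it produces the homeomorphism $M \to \Gamma \backslash G$ as part of the conjugacy. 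Second, the claim that the affine representative $\bar\alpha$ is expanding is asserted in one line, but $\alpha$ is constructed purely algebraically from $\varphi = f_*$, so one must genuinely transfer the metric contraction of $\tilde f^{-1}$ into the statement that all eigenvalues of the linear part have modulus $>1$; this is a real lemma (one shows the displacement of $\varphi^n(\gamma)$ grows like $\theta^{-n}$ for every $\gamma \neq 1$ and then rules out eigenvalues of modulus $\leq 1$), and it must be settled \emph{before} Shub's conjugacy can be applied, since that theorem requires both maps to be expanding. Third, your count that a ball of radius $R$ contains ``about $\beta(R)/d$'' points of the sublattice orbit is not justified a priori for a group of unknown growth; Franks' actual argument decomposes $\pi$ into the $d$ cosets of $\varphi(\pi)$ and bounds $\beta(R) \leq d\,\beta(\theta(R+C))$, which yields the same polynomial bound. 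None of these invalidates the strategy, but they are precisely the places where a complete write-up must supply the real content from the literature.
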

\noindent So the problem of classifying all infra-nilmanifolds admitting an expanding map is translated into the question of classifying all infra-nilmanifolds admitting an expanding affine infra-nilendomorphism. To solve this problem, we need to know more about the algebraic structure of the almost-Bieberbach groups $\Gamma$. From the exact sequence (\ref{seq}), it follows that the group $\Gamma$ contains a normal subgroup of finite index which is an $\mathcal{F}$-group, but this exact sequence does not split in general. We can embed the exact sequence (\ref{seq}) in a split exact sequence, but to do this, we need to recall some properties of \cite[Chapter 6]{sega83-1} about $\mathcal{F}$-groups. 

Let $N$ be an $\mathcal{F}$-group, then we can assume that $N$ is an uniform lattice of a connected and simply connected nilpotent Lie group $G$ with corresponding Lie algebra $\lie$. The exponential map $\exp: \lie \to G$ is a diffeomorphism and we denote by $\log: G \to \lie$ its inverse. The rational span of $\log(N)$, denoted as $\lien$, is a rational form of the real Lie algebra $\lie$ and the corresponding nilpotent group $N^\Q = \exp(\lien)$ is the radicable hull or rational Mal'cev completion of the group $N$. The automorphism groups $\Aut(N^\Q)$ and $\Aut(\lien)$ are naturally isomorphic via the exponential map and in the remaining part of this paper we will identify these two groups without further mentioning the exponential map. It's a general fact that every finite dimensional rational nilpotent Lie algebra occurs in this way and that two $\mathcal{F}$-groups have isomorphic radicable hull if and only if they are abstractly commensurable. A finitely generated subgroup $N^\prime$ of $N^\Q$ such that $N^\Q$ is also the radicable hull of $N^\prime$ (i.e.\ if $\log(N^\prime)$ spans $\lien$ as a rational vector space) is called a full subgroup. 

In general, $\log(N)$ is not a Lie subring or even an additive subgroup of $\lie$ and therefore the following definition is important:
\begin{Def}
An $\mathcal{F}$-group $N$ is called a lattice group if $\log(N)$ is an additive subgroup of $\lie$. 
\end{Def}
\noindent This means that for a lattice group $N$ we can always find a basis for $\lien$ such that $\log(N)$ is equal to the $\Z$-span of this basis.
It is well known that every $\mathcal{F}$-group is a subgroup of finite index in some lattice group. Since the intersection of lattice groups is again a lattice group, there exists a smallest lattice group containing $N$ and we will denote this lattice group as $N^{\lat}$. If $\varphi: N \to N$ is an injective group morphism, then there exists an unique extension $\varphi^\Q: N^\Q \to N^\Q$ of $\varphi$. Since $\varphi$ is injective, also $\varphi^\Q$ is injective and thus the injective group morphisms can be seen as a subset of $\Aut(N^\Q) = \Aut(\lien)$. Therefore it makes sense to talk about the determinant and characteristic polynomial of an injective group morphism $\varphi: N \to N$. Since $\varphi^\Q$ maps lattice groups to lattice groups, the following proposition is immediate (see also \cite[Lemma 4.1.]{bele03-1}):
\begin{Prop}
\label{extension}
Let $N$ be an $\mathcal{F}$-group and $\varphi: N \to N$ an injective group morphism. Then the following are true:
\begin{enumerate}
\item $\varphi^\Q(N^{\lat}) = \left(\varphi(N)\right)^{\lat} \subseteq N^{\lat}$;
\item $ [N:\varphi(N)]=[N^{\lat}:\varphi^{\Q}(N^{\lat})] = \vert \det \varphi^\Q \vert$.
\end{enumerate}
\end{Prop}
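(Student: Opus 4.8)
The plan is to exploit that $\varphi^\Q$ is in fact a \emph{bijective} automorphism of $N^\Q$ which, on the Lie algebra side, is given by a linear map. Under the identification $\Aut(N^\Q)=\Aut(\lien)$ the morphism $\varphi^\Q$ corresponds to a linear automorphism $\Phi=d\varphi^\Q$ of $\lien$ with $\log\circ\varphi^\Q=\Phi\circ\log$; injectivity of $\varphi^\Q$ forces $\Phi$ to be injective, hence bijective since $\lien$ is finite dimensional, so both $\varphi^\Q$ and $(\varphi^\Q)^{-1}$ are genuine automorphisms of $N^\Q$. Because $\Phi$ is additive it carries additive subgroups of $\lien$ to additive subgroups, so both $\varphi^\Q$ and $(\varphi^\Q)^{-1}$ map lattice groups to lattice groups; this is the fact announced before the statement. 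I also record that $\varphi^\Q(N)=\varphi(N)$, as $\varphi^\Q$ extends $\varphi$.

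For item (1) I would argue purely by the minimality that defines $(\,\cdot\,)^{\lat}$. Since $\varphi^\Q(N^{\lat})$ is a lattice group containing $\varphi^\Q(N)=\varphi(N)$, minimality of $(\varphi(N))^{\lat}$ gives $(\varphi(N))^{\lat}\subseteq\varphi^\Q(N^{\lat})$. Conversely, $(\varphi^\Q)^{-1}\bigl((\varphi(N))^{\lat}\bigr)$ is a lattice group containing $(\varphi^\Q)^{-1}(\varphi(N))=N$, so minimality of $N^{\lat}$ yields $N^{\lat}\subseteq(\varphi^\Q)^{-1}\bigl((\varphi(N))^{\lat}\bigr)$, i.e.\ $\varphi^\Q(N^{\lat})\subseteq(\varphi(N))^{\lat}$. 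Hence $\varphi^\Q(N^{\lat})=(\varphi(N))^{\lat}$, and since $\varphi(N)\subseteq N\subseteq N^{\lat}$ with $N^{\lat}$ a lattice group, minimality once more gives $(\varphi(N))^{\lat}\subseteq N^{\lat}$, which completes item (1).

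For item (2), first observe that $\varphi^\Q$ restricts to an isomorphism $N^{\lat}\to\varphi^\Q(N^{\lat})$ carrying $N$ onto $\varphi(N)$, whence $[N^{\lat}:N]=[\varphi^\Q(N^{\lat}):\varphi(N)]$; all indices in play are finite because $N$, $\varphi(N)$ and $\varphi^\Q(N^{\lat})$ are full subgroups of $N^\Q$ and hence mutually commensurable. Writing multiplicativity of the index along the two chains $\varphi(N)\le N\le N^{\lat}$ and $\varphi(N)\le\varphi^\Q(N^{\lat})\le N^{\lat}$ and cancelling the common factor $[N^{\lat}:N]$ then gives $[N:\varphi(N)]=[N^{\lat}:\varphi^\Q(N^{\lat})]$. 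It remains to identify this with $|\det\varphi^\Q|$: passing to the Lie algebra, $\Lambda:=\log(N^{\lat})$ is a full lattice in $\lien$ and $\log(\varphi^\Q(N^{\lat}))=\Phi(\Lambda)\subseteq\Lambda$, so the group index of $\varphi^\Q(N^{\lat})$ in $N^{\lat}$ should equal the additive-lattice index $[\Lambda:\Phi(\Lambda)]=|\det\Phi|=|\det\varphi^\Q|$.

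I expect the only genuine content to lie in this last identification, namely that the group index $[N^{\lat}:\varphi^\Q(N^{\lat})]$ of lattice groups coincides with the additive index $[\Lambda:\Phi(\Lambda)]$; everything else is formal manipulation with $(\,\cdot\,)^{\lat}$ and with indices, consistent with the authors calling the proposition immediate. This step can be made rigorous either by the covolume argument — an automorphism of the simply connected nilpotent Lie group scales Haar measure by the absolute value of the determinant of its differential, while the index of a finite-index lattice is the ratio of covolumes — or by filtering $\Lambda$ through the lower central series so that the nilpotent multiplication and the addition induce the same successive quotients, both of which are standard for lattice groups (cf.\ \cite[Chapter 6]{sega83-1}).
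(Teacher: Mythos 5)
Your proof is correct and is essentially the paper's own argument written out in full: the paper gives no details, declaring the proposition ``immediate'' from the fact that $\varphi^\Q$ (and hence also $(\varphi^\Q)^{-1}$) maps lattice groups to lattice groups, and deferring the index--determinant identity to Lemma 4.1 of \cite{bele03-1}. Your minimality argument for item (1), and the index transport plus the covolume (or lower-central-series) identification $[N^{\lat}:\varphi^\Q(N^{\lat})]=\vert\det\varphi^\Q\vert$ for item (2), are exactly the details being suppressed there.
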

The first statement shows that every injective group morphism of $N$ also induces an injective group morphism on $N^{\lat}$ and we will denote the induced map as $\varphi^{\lat}$.



\smallskip

Since every automorphism of $N$ has a unique extension to an automorphism of $N^\Q$, we can construct the following commutative diagram from the exact sequence (\ref{seq}) where the horizontal sequences are exact:
\begin{align*}
\xymatrix{ 1 \ar[r] & N \ar[r]\ar@{ >->}[d] & \Gamma \ar[r]\ar@{
>->}[d] &
F \ar[r] \ar@{=}[d] &1\\
1 \ar[r] & N^\Q \ar[r] & \Gamma^\Q \ar[r] &F \ar[r] & 1. }
\end{align*}
The lower exact sequence splits and by fixing a splitting morphism $s: F \to \Gamma^\Q$, we get an injective morphism $F \to \Aut(N^\Q) = \Aut(\lien)$. This morphism is called the rational holonomy representation and we will always identify the holonomy group $F$ with its image under this morphism, so $F \le \Aut(\lien)$. Under this identification, we get that $\Gamma \subseteq N^\Q \rtimes F$. 

\smallskip

By using the rational holonomy representation, we have the following version of the Generalized Second Bieberbach Theorem:
\begin{Thm}
\label{Bieberbach}
Let $\varphi: \Gamma \to \Gamma^\prime$ be an isomorphism between two almost-Bieberbach groups with commensurable Fitting subgroups $N$ and $N^\prime$, then there exists an affine transformation $\alpha \in N^\Q \rtimes \Aut(N^\Q)$ such that $\varphi(\gamma) = \alpha \gamma \alpha^{-1}$. 
\end{Thm}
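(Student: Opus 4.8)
The plan is to reduce the statement, via the Mal'cev completion, to the classical averaging argument behind the second Bieberbach theorem, carried out inside the radicable hull. First I would exploit that the Fitting subgroup is characteristic: since $N$ and $N'$ are the maximal normal nilpotent subgroups of $\Gamma$ and $\Gamma'$, any isomorphism satisfies $\varphi(N) = N'$, so $\varphi$ restricts to an isomorphism $\psi := \varphi|_N : N \to N'$ of $\mathcal{F}$-groups. By the universal property of the radicable hull, $\psi$ extends uniquely to an isomorphism $\psi^\Q : N^\Q \to N'^\Q$ of rational nilpotent Lie groups, and this is the natural candidate for the linear part of the sought affine transformation. (Note that the commensurability hypothesis is automatic here, its role being to guarantee that the two hulls may be identified.)

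Next I would use $\psi^\Q$ to conjugate away the difference between the two ambient affine groups. The isomorphism $\psi^\Q$ induces an isomorphism of affine groups $N^\Q \rtimes \Aut(N^\Q) \to N'^\Q \rtimes \Aut(N'^\Q)$; after applying it I may assume $N = N'$, that $\Gamma$ and $\Gamma'$ share the same Fitting subgroup inside a single affine group $N^\Q \rtimes \Aut(N^\Q)$, and that $\varphi$ restricts to the identity on $N$. It then suffices to produce an $\alpha$ in this affine group that realizes $\varphi$ by conjugation and fixes $N$ pointwise, and to transport the result back through $\psi^\Q$ at the end.

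The core computation is then the following. For $\gamma \in \Gamma$ and $n \in N$ one has $\gamma n \gamma^{-1} \in N$, so applying $\varphi$ (which fixes $N$) gives $\gamma n \gamma^{-1} = \varphi(\gamma)\, n\, \varphi(\gamma)^{-1}$ for every $n \in N$; since $N$ is a uniform lattice, hence Zariski-dense, in $N^\Q$, this forces $\varphi(\gamma)^{-1}\gamma$ to centralize all of $N^\Q$. The centralizer $Z$ of $N^\Q$ inside $N^\Q \rtimes \Aut(N^\Q)$ consists precisely of the affine transformations acting on $N^\Q$ as right translations, and is therefore itself isomorphic to $N^\Q$, so I may write $\varphi(\gamma) = \gamma\, z_\gamma^{-1}$ with $z_\gamma \in Z$. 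Because $\varphi$ is a homomorphism, the assignment $\gamma \mapsto z_\gamma$ is a crossed homomorphism for the conjugation action of $\Gamma$ on $Z$, and it vanishes on $N$; realizing $\varphi$ by conjugation by a single element of $Z$ is then exactly the assertion that this cocycle is a coboundary.

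Establishing that the cocycle is principal is where I expect the main difficulty to lie, and where the radicability of $N^\Q$ is indispensable. Since the cocycle is trivial on $N$, it is controlled by the finite quotient $F = \Gamma/N$, and the problem becomes the vanishing of a cohomology class of the finite group $F$ with coefficients in the uniquely divisible nilpotent group $Z \cong N^\Q$. I would handle this by dévissage along the lower central series of $N^\Q$: at each stage the successive central quotient is a rational vector space carrying an $F$-action, and there the standard averaging over $F$ (dividing by $\vert F\vert$, legitimate precisely because the coefficients are uniquely divisible) kills both the obstruction to lifting and the resulting class. The element of $Z \cong N^\Q$ produced by this coboundary, transported back through $\psi^\Q$, is the desired affine transformation $\alpha \in N^\Q \rtimes \Aut(N^\Q)$. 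It is worth stressing that $\alpha$ genuinely need not lie in $\Gamma$, nor even in a lattice group: the division by $\vert F\vert$ is exactly what forces the translation part into the full rational hull $N^\Q$, which is the conceptual reason the theorem is stated over $N^\Q \rtimes \Aut(N^\Q)$.
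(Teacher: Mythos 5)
Your proof is correct and follows essentially the same route as the paper: the paper establishes this theorem by invoking the classical proof of the generalized second Bieberbach theorem (\cite[Section 2.2]{deki96-1}) with the Lie group $G$ replaced by the radicable hull $N^\Q$, and that argument is exactly yours --- use the unique Mal'cev extension of $\varphi|_N$ to reduce to the case where $\varphi$ fixes the common Fitting subgroup pointwise, then realize the resulting cocycle of the finite holonomy group, with values in the uniquely divisible nilpotent centralizer $Z \cong N^\Q$, as a coboundary by averaging along a central series. The only caveat, which does not affect correctness, is that the commensurability hypothesis should be read as saying that $N$ and $N'$ share literally the same radicable hull (so that $\Gamma$ and $\Gamma'$ sit in one affine group $N^\Q \rtimes \Aut(N^\Q)$), rather than being a condition that is ``automatic.''
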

\noindent The proof of this version of the Second Bieberbach Theorem is identical to the proof given in \cite[Section 2.2]{deki96-1} by replacing the Lie group $G$ by the radicable hull $N^\Q$. Note that the theorem also works for injective group morphisms $\Gamma \to \Gamma$, since the image is a subgroup of finite index in that case.

\section{Injective group morphisms of $\mathcal{F}$-groups}
\label{endos}

In this section we study the relation between injective group morphisms of an $\mathcal{F}$-group and automorphisms of the corresponding rational Lie algebra. Let $N$ be an $\mathcal{F}$-group with Lie algebra $\lien$ and $\varphi: N \to N$ an injective group morphism, then we know from Section \ref{intro} that $\varphi$ has a unique extension $\varphi^\Q: \lien \to \lien$. The question we study here is which automorphisms of $\lien$ induce a group morphism on $N$, i.e.\ for which automorphisms $\varphi^\Q \in \Aut(\lien)$ we have $\varphi^\Q(N) \subseteq N$. 

\smallskip

First we determine the properties of the extension $\varphi^\Q \in \Aut(\lien)$. From Proposition \ref{extension} we know that $\varphi$ induces an injective group morphism $\varphi^{\lat}: N^{\lat} \to N^{\lat}$ and it's obvious that $\varphi^\Q$ is also the unique extension of $\varphi^{\lat}$. By fixing a basis in $\log(N^{\lat})$ for $\lien$, we get an isomorphism from $\lien$ to $\Q^n$ as vector spaces such that this isomorphism maps $\log(N^{\lat})$ to $\Z^n$. The linear map $\varphi^\Q$ then maps $\Z^n$ to $\Z^n$ and this implies that the characteristic polynomial of $\varphi^\Q$ lies in $\Z[X]$. 

If we also assume that $\varphi$ is an automorphism of $N$, then $\varphi^{\lat}$ is an automorphism of $N^{\lat}$ and thus we get that $\varphi^\Q(\Z^n) = \Z^n$. So for every automorphism $\varphi \in \Aut(N)$ we get that $\vert\det(\varphi^\Q)\vert = 1$. An automorphism of $\lien$ with characteristic polynomial in $\Z[X]$ and determinant $\pm 1$ is called integer-like (see \cite{dd13-1}), so we conclude that $\varphi^\Q$ is integer-like in this case. It's easy to see that the converse is not true, for example the matrix $$\begin{pmatrix}\frac{5}{2} & \frac{1}{2} \\[4pt] \frac{1}{2}& \frac{1}{2}  \end{pmatrix}$$ is integer-like but doesn't map $\Z^2$ to $\Z^2$. 

\smallskip

So although not every integer-like automorphism of $N^\Q$ induces an automorphism of $N$, there is a partial converse by taking some power of the automorphism (see \cite[Theorem 3.4.]{deki99-1}):
\begin{Thm}
\label{PowerAuto}
Let $N$ be an $\mathcal{F}$-group with corresponding radicable hull $N^\Q$. If $\varphi \in \Aut(N^\Q)$ is integer-like then there exists some $k > 0$ such that $\varphi^k(N) = N$.
\end{Thm}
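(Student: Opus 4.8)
The plan is to reduce the statement to a finiteness argument about subgroups of a fixed finite index. Set $M = \langle \varphi^j(N) : j \in \Z\rangle \le N^\Q$, the subgroup generated by all forward and backward images of $N$. By construction $N \le M$ and $\varphi(M) = M$, so $\varphi$ restricts to an automorphism of $M$. Suppose for the moment that $M$ is again an $\mathcal F$-group (equivalently, that $N$ has finite index in $M$); write $m = [M:N] < \infty$, which is finite because $N$ is a full subgroup of $N^\Q$ and hence of $M$. Since $\varphi^j\rst{M}$ is an automorphism of $M$ carrying the pair $(M,N)$ to $(M,\varphi^j(N))$, every subgroup $\varphi^j(N)$ has index exactly $m$ in $M$. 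A finitely generated group has only finitely many subgroups of any given finite index, so among $\varphi^0(N), \varphi^1(N), \varphi^2(N), \dots$ two must coincide: $\varphi^{j_1}(N) = \varphi^{j_2}(N)$ for some $j_1 < j_2$. Applying the automorphism $(\varphi\rst{M})^{-j_1}$ gives $N = \varphi^{j_2 - j_1}(N)$, and $k = j_2 - j_1 > 0$ is the desired exponent.

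Everything therefore hinges on showing that $M$ is finitely generated, and this is exactly where the integer-like hypothesis enters. The point is that $\det \varphi = \pm 1$ forces the constant term of the (monic, integral) characteristic polynomial of $\varphi$ to be $\pm 1$, so by Cayley--Hamilton $\varphi^{-1}$ is a $\Z$-polynomial in $\varphi$; thus both $\varphi$ and $\varphi^{-1}$ are integral over $\Z$ and $\Z[\varphi]$ is a finitely generated $\Z$-module. Working additively with the lattice $L = \log(N^{\lat})$, the $\Z[\varphi]$-module $\sum_{j} \varphi^j(L)$ is then a finitely generated additive lattice; in other words the images $\varphi^j(N^{\lat})$, and hence the $\varphi^j(N)$, all have uniformly bounded denominators and are confined to one fixed lattice. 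This boundedness is what prevents $M$ from spreading out and should pin it inside a single $\mathcal F$-group, forcing $[M:N]<\infty$.

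The main obstacle is that the clean additive module argument does not transfer verbatim to the group $M$: the correspondence $\exp/\log$ between additive lattices in $\lien$ and lattice groups in $N^\Q$ is not a ring isomorphism, since the Baker--Campbell--Hausdorff formula introduces rational denominators, so a $\varphi$-invariant additive lattice need not be the logarithm of a subgroup. I would circumvent this by inducting on the nilpotency class $c$ of $N$. The base case $c=1$ is abelian, where $M = \Z[\varphi]\cdot N$ is literally the finitely generated $\Z[\varphi]$-module above and the argument is immediate. For the inductive step one passes to the last nonzero term $Z = \gamma_c(N^\Q)$ of the lower central series, which is characteristic and hence $\varphi$-invariant; the induced maps on $Z$ and on $N^\Q/Z$ are again integer-like, since a monic rational factor of a monic integer polynomial has integer coefficients (its roots are algebraic integers) and two nonzero integers whose product is $\pm 1$ are each $\pm 1$. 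Applying the induction hypothesis to the central abelian group $Z$ and to the class-$(c-1)$ quotient $N^\Q/Z$, and then reconciling the two resulting exponents through the central extension, yields the finiteness of $M$ in general and completes the proof.
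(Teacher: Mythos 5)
Your strategy is genuinely different from the paper's. The paper proves this statement through Theorem~\ref{primes}: it first replaces $N$ by the lattice group $N^{\lat}$, where the claim becomes a congruence argument (Lemma~\ref{lattice}: any $A$ with $A(\Z^n)\subseteq \Z^n$ and $\det A=\pm 1$ has a power congruent to $I_n$ modulo any fixed $m$), and then descends from $N^{\lat}$ back to $N$ along a subnormal chain, using that an automorphism permutes the finitely many normal subgroups of a given index (Lemmas~\ref{index} and~\ref{perm}); the $\varphi$-invariant full subgroup it starts from comes from Proposition~\ref{constructgroup}. In your proposal, the pigeonhole inside the $\varphi$-invariant envelope $M$, the Cayley--Hamilton observation that $\varphi^{-1}\in\Z[\varphi]$, the honest identification of the Baker--Campbell--Hausdorff obstruction, and the verification that the maps induced on $Z=\gamma_c(N^\Q)$ and on $N^\Q/Z$ are again integer-like are all correct.

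However, the final sentence of your inductive step hides a genuine gap. The induction hypothesis gives you a power $\psi=\varphi^{k}$ with $\psi(W)=W$ and $\bar{\psi}(\bar{N})=\bar{N}$, where $W=N\cap Z$ and $\bar{N}=NZ/Z$; but these two facts do not by themselves ``reconcile'' into control of $N$. A subgroup of a central extension is not determined, even up to commensurability, by its intersection with the centre together with its image modulo the centre: in $\Z\times\Q$ with centre $0\times\Q$, the subgroups $\Z\times 0$ and $\{(n,n\alpha):n\in\Z\}$ with $\alpha\neq 0$ have the same image and the same (trivial) intersection with the centre, yet they meet trivially and so are not commensurable. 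In your setting every $\psi^{j}(N)$ lies in $NZ$, so $M'=\langle\psi^{j}(N):j\in\Z\rangle$ equals $N\cdot(M'\cap Z)$, and the entire problem is to bound $M'\cap Z$: it contains not just $W$ but all central ``defects'' $z\in Z$ arising in decompositions $\psi^{j}(n)=n'z$ with $n'\in N$, and nothing in your proposal bounds them. The missing argument runs roughly as follows: the defect map $N\to Z/W$, $n\mapsto zW$, is a homomorphism (because $Z$ is central) from a finitely generated group into a torsion group (here you must use that $W$ is a \emph{full} lattice in $Z$, a fact never invoked in your proposal), hence has finite image, contained in the $v$-torsion subgroup $\{zW: z^{v}\in W\}$ for some fixed $v>0$; and, crucially using $\psi(W)=W$, every power $\psi^{i}$ preserves this $v$-torsion subgroup, so an induction on $j$ confines the defects of all $\psi^{j}$ to it. Only then do you get $M'\subseteq N\cdot\{z\in Z: z^{v}\in W\}$, hence $[M':N]<\infty$, and your pigeonhole finishes the proof. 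Without this uniform bound, ``reconciling the two resulting exponents through the central extension'' asserts exactly the point that needs proving.
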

\noindent For example, the third power of the matrix above is equal to $\begin{pmatrix} 17 & 4\\4 & 1 \end{pmatrix} \in \GL(n,\Z)$. This theorem is crucial for the study of Anosov diffeomorphisms on infra-nilmanifolds. Unfortunately, a similar theorem cannot be true in the more general case of automorphisms $\varphi \in \Aut(N^\Q)$ with characteristic polynomial in $\Z[X]$, even not in the abelian case, as we can see from the following example:
\begin{Ex}
Consider the matrix 
\begin{align*} A = \begin{pmatrix}
\frac{1}{2}& \frac{1}{2} \\[4pt]
-\frac{3}{2}& \frac{5}{2}
\end{pmatrix}
\end{align*}
with characteristic polynomial $p(X) = X^2 - 3X + 2$ and the vector $v = \begin{pmatrix}0 \\ 1 \end{pmatrix}$. A computation shows that 
\begin{align*}
A^k(v) = v + \frac{1}{2} \sum_{i=1}^k 2^{i-1} \begin{pmatrix}1 \\ 3 \end{pmatrix}
\end{align*}
and thus $A^k(v) \notin \Z^2$ for all $k > 0$. This shows us that the condition $\vert \det (\varphi) \vert =1$ is really necessary in Theorem \ref{PowerAuto}. 
\end{Ex}

To simplify notations, we will also denote by $\varphi \in \Aut(N^\Q)$ the unique extension of an injective group morphism $\varphi: N \to N$. The main result of this section shows that we can partially restore Theorem \ref{PowerAuto} by putting constraints on the determinant of $\varphi$: 
\begin{Thm}
\label{primes}
Let $N_1$ and $N_2$ be $\mathcal{F}$-groups with identical radicable hull $N^\Q$. Then there exists a finite number of primes $p_1, \ldots, p_l$ such that for every injective group morphism $\varphi: N_1 \to N_1$ with $p_j \nmid \det (\varphi)$ for all $j \in \{1, \ldots, l\}$, there exists some $k > 0$ such that $\varphi^k(N_2) \subseteq N_2$.
\end{Thm}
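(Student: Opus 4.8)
The plan is to localise: pass to the $p$-adic Mal'cev completions of $N^\Q$ and isolate the finitely many primes at which the lattices $N_1$ and $N_2$ really differ. Since $\varphi^\Q\in\Aut(\lien)$, for each prime $p$ it induces (after extending scalars to $\lien\otimes_\Q\Q_p$) an automorphism $\varphi_p$ of the radicable nilpotent $\Q_p$-group $G_p$, and the lattices $N_1,N_2\subseteq N^\Q\subseteq G_p$ have compact open closures $\overline{N_1}_p,\overline{N_2}_p$. The determinant is read off locally through Proposition \ref{extension}: from $[N_1:\varphi(N_1)]=|\det\varphi^\Q|$ one gets $[\overline{N_1}_p:\varphi_p(\overline{N_1}_p)]=p^{v_p(\det\varphi)}$, so that $p\nmid\det\varphi$ forces $\varphi_p$ to restrict to an \emph{automorphism} of $\overline{N_1}_p$. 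This is the only place the hypothesis on the determinant will enter.

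First I would fix the primes. Let $p_1,\dots,p_l$ be those $p$ for which $\overline{N_1}_p\neq\overline{N_2}_p$; because $N_1$ and $N_2$ share a radicable hull they are commensurable, so $N_1\cap N_2$ has finite index in both and this set is finite and depends only on $N_1$ and $N_2$. At every prime outside this set the two closures coincide, and then $\varphi_p(\overline{N_2}_p)=\varphi_p(\overline{N_1}_p)=\overline{\varphi(N_1)}_p\subseteq\overline{N_1}_p=\overline{N_2}_p$, using only $\varphi(N_1)\subseteq N_1$; hence $\varphi_p^{\,j}(\overline{N_2}_p)\subseteq\overline{N_2}_p$ for all $j\ge0$ there, with no constraint on $\det\varphi$.

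The work is concentrated at the bad primes $p_1,\dots,p_l$, where the hypothesis $p_j\nmid\det\varphi$ guarantees that $\varphi_p$ is an automorphism of $K:=\overline{N_1}_p$; I must find $k_p$ with $\varphi_p^{\,k_p}(\overline{N_2}_p)\subseteq\overline{N_2}_p$. Writing $J=\overline{N_2}_p$ and using $\varphi_p(K)=K$, each iterate satisfies $\varphi_p^{\,i}(J)\cap K=\varphi_p^{\,i}(J\cap K)$, a subgroup of $K$ of index $[K:J\cap K]$ independent of $i$; as $K$ has only finitely many open subgroups of that index and $\varphi_p$ permutes them, some power of $\varphi_p$ fixes $U:=J\cap K$. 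For that power all iterates of $J$ contain $U$ with one fixed index $b$, so every element $g$ of any iterate satisfies $g^{b!}\in U$; since $x\mapsto x^{b!}$ is a homeomorphism of the radicable group $G_p$, all iterates lie in the one compact set $(x\mapsto x^{b!})^{-1}(U)$ and hence, being subgroups containing the open subgroup $U$, among a finite list. The orbit $\{\varphi_p^{\,i}(J)\}$ is therefore finite, so periodic, which yields $k_p$. This finiteness is exactly the compactness phenomenon behind Theorem \ref{PowerAuto}, now used one prime at a time.

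Finally I would take $k$ to be the least common multiple of the $k_p$ over $p_1,\dots,p_l$; then $\varphi_p^{\,k}(\overline{N_2}_p)\subseteq\overline{N_2}_p$ at every prime, by periodicity at the bad primes and by the previous paragraph elsewhere. As an $\mathcal F$-group is the intersection of its $p$-adic closures inside its radicable hull, $N_2=\bigcap_p(\overline{N_2}_p\cap N^\Q)$, so for $x\in N_2$ we get $\varphi^k(x)=\varphi_p^{\,k}(x)\in\overline{N_2}_p$ for every $p$, whence $\varphi^k(x)\in N_2$ and $\varphi^k(N_2)\subseteq N_2$. The main obstacle is the orbit-finiteness of the previous paragraph: passing from the vector-space lattice picture (where it is immediate) to commensurable compact open subgroups of the nonabelian group $G_p$ is the technical heart, and it is precisely there that compactness of $G_p$ — equivalently that $\varphi_p$ preserves Haar measure at a good prime — does the work. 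A pleasant by-product of localising $N_2$ directly is that the possible failure of $N_2$ to be a lattice group costs nothing.
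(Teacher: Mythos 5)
Your proposal is correct, but it follows a genuinely different route from the paper's. The paper argues globally and elementarily: it first reduces to $N_1$ being a lattice group, settles the case where $N_2$ is also a lattice group by pure linear algebra (Lemma \ref{lattice}: the primes are those dividing the denominators of the change-of-basis matrix, and some power of $\varphi$ is congruent to the identity modulo the product $m$ of those denominators because $\GL(n,\Z_m)$ is finite), and then descends from $N_2^{\lat}$ to $N_2$ along a subnormal chain, using Lemma \ref{perm} at each step: $\varphi$ permutes the finitely many normal subgroups of a given index coprime to $\det\varphi$, so a power fixes the one of interest. You instead localize: at the finitely many primes where the closures of $N_1$ and $N_2$ in the $\Q_p$-Mal'cev completion differ, the hypothesis $p\nmid\det\varphi$ forces $\varphi_p$ to be an automorphism of $\overline{N_1}_p$ (cleanest via the Haar-measure modulus $\mathrm{mod}(\varphi_p)=|\det\varphi|_p$, which you allude to), after which a permutation-plus-compactness argument shows the $\varphi_p$-orbit of $\overline{N_2}_p$ is finite, hence periodic; at all other primes invariance is automatic, and the global conclusion follows from $N_2=\bigcap_p\bigl(\overline{N_2}_p\cap N^\Q\bigr)$. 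Notably, your bad-prime step still contains the paper's combinatorial kernel -- an automorphism permutes the finitely many open subgroups of $\overline{N_1}_p$ of a fixed index, so a power fixes $\overline{N_1}_p\cap\overline{N_2}_p$ -- but embedded in a different frame, and your orbit-finiteness trick (all iterates of $J$ lie in the compact set $(x\mapsto x^{b!})^{-1}(U)$ and contain the open subgroup $U$) has no counterpart in the paper. What the paper's route buys is self-containedness (only indices, finite quotients and congruences, no $p$-adic machinery) and an explicit description of the primes; what yours buys is conceptual clarity: it pinpoints exactly where the determinant hypothesis enters, treats $N_2$ directly without lattice-group or subnormality reductions, and renders the good primes trivial. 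Two caveats: you assert without proof several standard facts (closures of $\mathcal{F}$-groups in the completion are compact open pro-$p$ subgroups; the local index equals $p^{v_p(\det\varphi)}$; a topologically finitely generated profinite group has finitely many open subgroups of each index; $N_2=\bigcap_p\bigl(\overline{N_2}_p\cap N^\Q\bigr)$) -- all true, e.g.\ via Mal'cev coordinates, but a complete write-up must supply them -- and your closing phrase about ``compactness of $G_p$'' is a slip, since $G_p$ is only locally compact: what does the work is compactness of the closures and of the preimage set above.
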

\noindent Note that every integer-like automorphism of $N^\Q$ induces an automorphism on some full subgroup of $N^\Q$, so this theorem is really a generalization of Theorem \ref{PowerAuto}. 

\smallskip

The proof of this theorem uses a few technical lemmas. The first one gives us information about the index of the intersection of subgroups:
\begin{Lem}
\label{index}
Let $H$ be any group with finite index subgroups $K_1$ and $K_2$ of index respectively $k_1$ and $k_2$. If $\gcd(k_1,k_2) = 1$, then $K_1 \cap K_2$ is a subgroup of index $k_1k_2$ and if $K_1^\prime$ is a normal subgroup of index $k_1$ such that $K_1 \cap K_2  = K_1^\prime \cap K_2$, then $K_1 = K_1^\prime$.
\end{Lem}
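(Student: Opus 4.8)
The statement splits naturally into two parts, and I would treat them in sequence, with the index computation feeding into the rigidity statement. For the index claim, the plan is to squeeze $[H:K_1\cap K_2]$ between two bounds. Since $K_1\cap K_2\subseteq K_1$ and $K_1\cap K_2\subseteq K_2$, both $k_1$ and $k_2$ divide $[H:K_1\cap K_2]$; as $\gcd(k_1,k_2)=1$, this already forces $k_1k_2\mid [H:K_1\cap K_2]$. For the reverse inequality I would use the standard injection of coset spaces $H/(K_1\cap K_2)\to H/K_1\times H/K_2$ sending $h(K_1\cap K_2)$ to $(hK_1,hK_2)$, which is well defined and injective precisely because $h^{-1}h'$ lies in $K_1\cap K_2$ iff it lies in both $K_1$ and $K_2$. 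This gives $[H:K_1\cap K_2]\le k_1k_2$, and combined with the divisibility it yields the equality $[H:K_1\cap K_2]=k_1k_2$.

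For the second part I would set $L=K_1\cap K_2=K_1'\cap K_2$ and aim to establish the inclusion $K_1\subseteq K_1'$; since both subgroups have the same finite index $k_1$, equality follows immediately. The quantity to track is $[H:K_1\cap K_1']$. First observe that $K_1\cap K_1'\cap K_2=(K_1\cap K_2)\cap(K_1'\cap K_2)=L$, so that $L\subseteq K_1\cap K_1'$ and hence $[H:K_1\cap K_1']$ divides $[H:L]=k_1k_2$ by the first part. Since moreover $K_1\cap K_1'\subseteq K_1$, the index $k_1$ divides $[H:K_1\cap K_1']$, so I may write $[H:K_1\cap K_1']=k_1d$ with $d\mid k_2$, and consequently $[K_1:K_1\cap K_1']=d$.

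The decisive step is where the normality of $K_1'$ enters. By the second isomorphism theorem $K_1K_1'$ is a subgroup with $[K_1K_1':K_1']=[K_1:K_1\cap K_1']=d$; since $K_1'\subseteq K_1K_1'\subseteq H$ and $[H:K_1']=k_1$, this shows $d\mid k_1$. Having $d\mid k_1$ and $d\mid k_2$ with $\gcd(k_1,k_2)=1$ forces $d=1$, whence $K_1=K_1\cap K_1'\subseteq K_1'$ and therefore $K_1=K_1'$. The only subtle point, and the part I expect to be the real obstacle, is to isolate the single divisor $d$ and trap it between the two coprime indices; the normality hypothesis on $K_1'$ is exactly what licenses the second isomorphism theorem that delivers $d\mid k_1$, and without it the argument has no grip on the second divisibility.
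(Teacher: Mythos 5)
Your proof is correct and follows essentially the same route as the paper: both parts hinge on trapping the index $[K_1:K_1\cap K_1']$ between the two coprime divisors $k_1$ and $k_2$, with the second isomorphism theorem (licensed by normality of $K_1'$) supplying the divisibility by $k_1$ and the first part supplying the divisibility by $k_2$. The only cosmetic differences are that you justify the upper bound $[H:K_1\cap K_2]\le k_1k_2$ via the explicit injection of coset spaces (which the paper asserts without comment) and that you track indices over $H$ and factor out $k_1$, where the paper computes directly inside $K_1$.
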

\begin{proof}
For the first statement, note that $[H:K_1\cap K_2] = [H:K_1] [K_1:K_1 \cap K_2] = [H:K_2][K_2:K_1 \cap K_2]$ and thus $k_j \mid [H:K_1\cap K_2]$ for $j\in \{1,2\}$. Since $\gcd(k_1,k_2) = 1$ and $[H:K_1\cap K_2] \leq [H:K_1][H:K_2]$, we have that $[H:K_1 \cap K_2] = k_1 k_2$. Note that this is equivalent to the fact that $[K_1: K_1 \cap K_2] = k_2$.

For the second statement, we assume that $K_1 \cap K_2 =  K_1^\prime \cap K_2$ and we show that $K_1 \cap K_1^\prime$ is a subgroup of index $1$ in $K_1$. As $K_1^\prime$ is a normal subgroup, $K_1 K_1^\prime$ is a subgroup of $H$. Since the index of $K_1 \cap K_1^\prime$ in $K_1$ is equal to the index of $K_1^\prime$ in $K_1 K_1^\prime$, it divides $k_1$. From the first statement we know that $K_1 \cap K_2 = K_1 \cap K_1^\prime \cap K_2$ is a subgroup of index $k_2$ in $K_1$. Since $K_1 \cap K_1^\prime \cap K_2 \subseteq K_1 \cap K_1^\prime \subseteq K_1$, the index of $K_1 \cap K_1^\prime$ in $K_1$ divides $k_2$. As the index of $K_1 \cap K_1^\prime$ in $K_1$ divides both $k_1$ and $k_2$, we conclude from $gcd(k_1,k_2) = 1$ that it's equal to $1$.
\end{proof}
\smallskip

As a consequence, we have the following result:
\begin{Lem}
\label{perm}
Let $H$ be any group with a finite number of normal subgroups of index $i$ and $\varphi: H \to H$ be an injective group morphism such that $\varphi(H)$ is a subgroup of finite index in $H$ and $\gcd([H:\varphi(H)],i) = 1$. If we write the distinct normal subgroups of index $i$ as $K_1, \ldots, K_m$, then there exists a permutation $\pi \in S_m$ such that $\varphi(K_j) = K_{\pi(j)} \cap \varphi(H)$ for all $j \in \{1, \ldots, m\}$. 
\end{Lem}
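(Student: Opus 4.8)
The plan is to phrase everything in terms of a single map on the finite set of normal subgroups of index $i$ and then win by a counting argument. Write $k_1 = i$ and $k_2 = [H:\varphi(H)]$, so the hypothesis reads $\gcd(k_1,k_2)=1$. Since $\varphi$ is injective it is an isomorphism onto $\varphi(H)$, and therefore $K \mapsto \varphi(K)$ is a bijection between the normal subgroups of index $i$ of $H$ and those of $\varphi(H)$. In particular $\varphi(K_1),\dots,\varphi(K_m)$ are precisely the $m$ distinct normal subgroups of index $i$ of $\varphi(H)$. So the target set $\{\varphi(K_1),\dots,\varphi(K_m)\}$ has the same cardinality $m$ as the domain $\{K_1,\dots,K_m\}$; this bookkeeping is what will eventually let me convert an injection into a permutation.

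Next I would introduce the map $\Phi\colon K \mapsto K \cap \varphi(H)$ and check it lands in that target set. For each $j$ the subgroup $K_j \cap \varphi(H)$ is normal in $\varphi(H)$ because $K_j \trianglelefteq H$ (conjugation by an element of $\varphi(H)\subseteq H$ fixes both $K_j$ and $\varphi(H)$). Applying the first statement of Lemma \ref{index} with $K_1 = K_j$ and $K_2 = \varphi(H)$ gives $[H:K_j \cap \varphi(H)] = i\,k_2$, whence $[\varphi(H):K_j \cap \varphi(H)] = i$. Thus $\Phi(K_j)$ is genuinely a normal subgroup of index $i$ of $\varphi(H)$, i.e.\ $\Phi(K_j) \in \{\varphi(K_1),\dots,\varphi(K_m)\}$.

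The crucial step, and the only place where real content enters, is the injectivity of $\Phi$, which is exactly what the second statement of Lemma \ref{index} is built to deliver. If $K_a \cap \varphi(H) = K_b \cap \varphi(H)$, then since $K_a$ and $K_b$ are both normal of index $k_1 = i$ with $\gcd(k_1,k_2)=1$, the second part of Lemma \ref{index} (taking $K_1 = K_a$, $K_1' = K_b$, $K_2 = \varphi(H)$) forces $K_a = K_b$. Hence $\Phi$ is an injection of the $m$-element set $\{K_1,\dots,K_m\}$ into the $m$-element set $\{\varphi(K_1),\dots,\varphi(K_m)\}$, so it is a bijection. Consequently, for each $j$ the element $\varphi(K_j)$ of the target has a unique preimage under $\Phi$, say $K_{\pi(j)}$, giving $\varphi(K_j) = K_{\pi(j)} \cap \varphi(H)$; bijectivity of $\Phi$ makes $\pi$ a permutation in $S_m$, as required.

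I do not expect a genuine obstacle here: once the two halves of Lemma \ref{index} are in hand the result is essentially a pigeonhole argument. The points that need care are purely organizational, namely confirming that $\Phi(K_j)$ really is normal of index $i$ in $\varphi(H)$ so that domain and codomain sit inside sets of the same finite cardinality $m$, and checking that the role assignment $k_1 = i$, $k_2 = [H:\varphi(H)]$ satisfies the coprimality hypothesis in both applications of the lemma. After that, injective plus equal finite cardinality yields the permutation for free.
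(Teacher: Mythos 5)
Your proof is correct and follows essentially the same route as the paper's: both use the first part of Lemma \ref{index} to show each $K_j \cap \varphi(H)$ is a normal subgroup of index $i$ in $\varphi(H)$, the second part for injectivity, and the isomorphism $H \cong \varphi(H)$ to count that $\varphi(H)$ has exactly $m$ such subgroups, which are precisely the $\varphi(K_j)$. Your explicit formulation via the map $\Phi\colon K \mapsto K \cap \varphi(H)$ is just a cleaner packaging of the identical counting argument.
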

\begin{proof}
By the previous lemma, we know that $K_j \cap \varphi(H)$ is a normal subgroup of index $i$ in $\varphi(H)$ and that if $K_j \neq K_{j^\prime}$ also $K_j \cap \varphi(H) \neq K_{j^\prime} \cap \varphi(H)$. Since $\varphi(H)$ is isomorphic to $H$, there are exactly $m$ different normal subgroups of index $i$ and thus they are all of the form $K_j \cap \varphi(H)$. Since also $\varphi(K_1), \ldots, \varphi(K_m)$ are $m$ distinct normal subgroup of index $i$ in $\varphi(H)$, we find a permutation $\pi \in S_m$ such that $\varphi(K_j) = K_{\pi(j)} \cap \varphi(H)$.
\end{proof}
\noindent Note that the first condition of Lemma \ref{perm} is satisfied for every index $i$ in all $\mathcal{F}$-groups. Also, as we mentioned in Section \ref{intro}, the image of an injective group morphism is always of finite index for these groups (and the index is given by the absolute value of the determinant, see Proposition \ref{extension}).

Take the notations of Lemma \ref{perm} and $\pi \in S_m$ the permutation corresponding to $\varphi: H \to H$. Then for any $k > 0$, we have that $$\varphi^k(K_j) = \varphi^{k-1}\left( K_{\pi(j)} \cap \varphi(H) \right) = \varphi^{k-1}\left(K_{\pi(j)}\right) \cap \varphi^k(H)$$ since $\varphi$ is injective and thus by induction we get that $\pi^k$ is the permutation corresponding to $\varphi^k$.
\smallskip

The last lemma we need is just the abelian version of Theorem \ref{primes}. Recall that a lattice of a vector space is a discrete subgroup which spans the vector space.
\begin{Lem}
\label{lattice}
Let $L$ be a lattice in $\Q^n$, then there exists primes $p_1, \ldots, p_l$ such that for every $A \in \GL(n,\Q)$ with $A(\Z^n) \subseteq \Z^n$ and $p_i \nmid \det(A)$, there exists some $k > 0$ such that $A^k(L) \subseteq L$.
\end{Lem}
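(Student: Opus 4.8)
The plan is to reduce the statement to a question about a single finite abelian group, where finiteness forces the desired periodicity. First I would choose a positive integer $N$ with $N\Z^n \subseteq L \subseteq \frac{1}{N}\Z^n$; such an $N$ exists because $L$ and $\Z^n$ are commensurable lattices (clearing denominators of a basis of $L$ gives $L \subseteq \frac{1}{N_1}\Z^n$, and expressing the standard basis in terms of a basis of $L$ gives $N_2\Z^n \subseteq L$, so one may take $N = N_1 N_2$). I would then let $p_1, \dots, p_l$ be exactly the prime divisors of $N$.

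Set $G = \left(\tfrac{1}{N}\Z^n\right) / (N\Z^n)$, a finite abelian group, and let $\bar L$ be the image of $L$ in $G$, which is a genuine subgroup precisely because $N\Z^n \subseteq L \subseteq \frac1N\Z^n$. Since $A(\Z^n) \subseteq \Z^n$, linearity gives $A(\tfrac1N\Z^n) \subseteq \tfrac1N\Z^n$ and $A(N\Z^n) \subseteq N\Z^n$, so $A$ descends to an endomorphism $\bar A$ of $G$. The essential point is that if no $p_j$ divides $\det(A)$, then $\gcd(\det A, N) = 1$, so $A$ is invertible modulo $N^2$; under the identification $\tfrac1N\Z^n/N\Z^n \cong (\Z/N^2\Z)^n$ (multiplication by $N$) the map $\bar A$ becomes the reduction of $A$, which is therefore an automorphism of $G$. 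This is exactly the step where the hypothesis on $\det A$ is indispensable, mirroring the role of the determinant condition in Theorem \ref{PowerAuto}.

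Now I would invoke Lemma \ref{perm} with $H = G$, $\varphi = \bar A$ and $i = [G : \bar L]$: the finite group $G$ has only finitely many (automatically normal) subgroups of index $i$, and since $\bar A$ is an automorphism we have $\bar A(G) = G$, so the coprimality condition $\gcd([G : \bar A(G)], i) = 1$ holds trivially. Writing the distinct index-$i$ subgroups as $K_1, \dots, K_m$, the lemma produces a permutation $\pi \in S_m$ with $\bar A(K_j) = K_{\pi(j)} \cap \bar A(G) = K_{\pi(j)}$, and by the remark following Lemma \ref{perm} the power $\bar A^k$ corresponds to $\pi^k$. Taking $k$ to be the order of $\pi$ then forces $\bar A^k(K_j) = K_j$ for every $j$, and in particular $\bar A^k(\bar L) = \bar L$, since $\bar L$ is one of the $K_j$.

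Finally I would lift this back to the lattice: the equality $\bar A^k(\bar L) = \bar L$ means $A^k(L) + N\Z^n = L$, and since $N\Z^n \subseteq L$ this yields $A^k(L) \subseteq L$, as required. The only genuinely delicate point in the whole argument is the automorphism claim for $\bar A$, i.e.\ that coprimality of $\det A$ with the chosen primes makes the reduction invertible; the remainder is the routine bookkeeping of passing to the finite quotient $G$ and back once $N$ is fixed.
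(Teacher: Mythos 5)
Your proof is correct, and it rests on the same finiteness mechanism as the paper's proof: the hypothesis $p_i \nmid \det(A)$ makes the reduction of $A$ modulo a fixed integer (which encodes the commensurability of $L$ and $\Z^n$) invertible, and the finiteness of the invertible matrices over that finite ring produces the power $k$. The packaging, however, is genuinely different. The paper picks a basis of $L$, lets $P$ be the change-of-basis matrix to the standard basis, takes $m$ to be the product of the denominators of $P$ and $P^{-1}$, and uses finiteness of $\GL(n,\Z_m)$ to get $A^k \equiv I_n \pmod{m}$; the inclusion $A^k(L) \subseteq L$ is then the explicit matrix computation $P^{-1}A^kP - I_n = P^{-1}(A^k - I_n)P$, which has integer entries by the choice of $m$. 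You instead sandwich $N\Z^n \subseteq L \subseteq \frac{1}{N}\Z^n$, pass to the finite quotient $G = \bigl(\frac{1}{N}\Z^n\bigr)/N\Z^n$, and only require that some power of the induced automorphism $\bar A$ stabilize the subgroup $\bar L$ --- a strictly weaker demand than $\bar A^k = \mathrm{id}$ --- which you extract from Lemma \ref{perm}. This buys a coordinate-free argument (no change-of-basis computation) and makes the proof stylistically uniform with the proof of Theorem \ref{primes} itself, which uses exactly the same subgroup-permutation device; in principle it also permits a smaller exponent $k$. The one criticism is that Lemma \ref{perm} is overkill at this point: once you know $\bar A$ is an automorphism of the finite group $G$ (an element of $\GL(n,\Z/N^2\Z)$ under your identification), it has finite order, and taking $k$ to be that order gives $\bar A^k = \mathrm{id}$, hence $\bar A^k(\bar L) = \bar L$ directly --- at which point your argument and the paper's coincide up to bookkeeping, with your $N^2$ playing the role of the paper's $m$.
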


\begin{proof}
Fix a basis for $\Q^n$ in $L$ and denote by $P$ the matrix of change of basis from this basis to the standard basis. Take $m$ the product of all the denominators of $P$ and $P^{-1}$ and take $p_i$ all the primes dividing $m$. Now assume $A \in \GL(n,\Q)$ with $A(\Z^n) \subseteq \Z^n$ and $p_i \nmid \det(A)$. The matrix representation of $A^k$ for the chosen basis in $L$ is given by $P^{-1} A^k P$ and thus we have to check that $P^{-1} A^k P$ has integer entries for some $k> 0$. 

From the choice of the primes $p_i$, it follows that $A$ projects to an element of $GL(n,\Z_m)$ and write this projection as $\bar{A}$. Since $\GL(n,\Z_m)$ is a finite group, there exists some $k >0$ such that $\bar{A}^k = \bar{I}_n$. This means that $m$ divides every entry of $A^k - I_n$. So we have that 
$$ P^{-1} A^k P - I_n = P^{-1} (A^k - I_n) P$$
has integer entries because of our choice of $m$ and thus also $P^{-1} A^k P$ has integer entries.
\end{proof}
\smallskip

With the help of these lemmas, we give a proof of the general version of Theorem \ref{primes}:

\begin{proof}[Proof of Theorem \ref{primes}]
Every injective group morphism $\varphi: N_1 \to N_1$ also induces an injective group morphism $\varphi^{\lat}:N_1^{\lat} \to N_1^{\lat}$ with the same determinant, so we can always assume that $N_1$ is a lattice group. By fixing a basis for $\lien$ in $\log(N_1)$, the vector space $\lien$ is isomorphic to $\Q^n$ such that $\log(N_1)$ is equal to $\Z^n$ under this isomorphism. The injective group morphisms of $N_1$ correspond to Lie algebra automorphisms which map $\Z^n$ to $\Z^n$. If we assume that also $N_2$ is a lattice group, then $\log(N_2)$ is a lattice in $\lien \approx \Q^n$ and the theorem follows from Lemma \ref{lattice}.

\smallskip

Next we show that if the theorem is true for $N_2$, it is also true for every normal subgroup $K$ of $N_2$. Let $p_j$ be the finite set of primes corresponding to $N_2$ and add all the primes that divide the index of $K$ in $N_2$. If $\varphi: N_1 \to N_1$ is a group morphism satisfying the conditions of the theorem, then we can assume $\varphi(N_2) \subseteq N_2$ by taking some power of $\varphi$. Since $\varphi$ is an injective group morphism of $N_2$, we can apply Lemma \ref{perm}. So take $K_1 = K, \ldots, K_m$ all normal subgroups of the same index as $K$ in $N_2$ and by Lemma \ref{perm}, we know that there exists $\pi \in S_m$ with $\varphi(K_j) = K_{\pi(j)} \cap \varphi(N_2)$. Now take $k > 0$ such that $\pi^k = 1$, then $$\varphi^k(K) = \varphi^k(K_1) = K_{\pi^k(1)} \cap \varphi^k(N_2) = K_1 \cap \varphi^k(N_2) \subseteq K_1 = K$$ and therefore $\varphi^k(K) \subseteq K$ as we wanted.

\smallskip

For the proof in the general case, we know that $N_2 \subseteq N_2^{\lat}$ is a subgroup of finite index. Since $N_2^{\lat}$ is a nilpotent group, $N_2$ is a subnormal subgroup, meaning we can find subgroups $H_0 = N_2 \le H_1 \le \ldots \le H_m = N_2^{\lat}$ with $H_j$ normal in $H_{j+1}$. By iterating the result of the previous paragraph, we conclude that the theorem also holds for $N_2$.
\end{proof}

\noindent From the proof it also follows that we can give the set of primes explicitly by looking at the index of $N_2$ in $N_2^{\lat}$ and the matrix of change of basis from $N_1^{\lat}$ to $N_2^{\lat}$.

\begin{Rmk}
For the particular case of integer-like automorphisms of $N^\Q$, the technicalities of Lemma \ref{perm} are avoided. So the proof of Theorem \ref{primes} gives us a simplified proof of Theorem \ref{PowerAuto}, avoiding the use of Jonqui\`ere groups as in \cite{deki99-1}.
\end{Rmk}

\section{Graded Lie algebras}
\label{SGrad}

In this section we introduce graded Lie algebras and show why they are important for studying expanding maps. The most important result states that a rational Lie algebra is positively graded if and only if it has an expanding automorphism, which was already known for real and complex Lie algebras. In this way, starting from an expanding automorphism and an arbitrary prime, we can construct an expanding automorphism with determinant a power of this prime, which is important for using Theorem \ref{primes}. All Lie algebras we consider in this section are finite dimensional.

\begin{Def}
A rational Lie algebra $\lien$ is called graded if it is endowed with a decomposition as a direct sum $$\displaystyle \lien = \bigoplus_{i \in \Z} \lien_i$$ of subspaces $\lien_i \subseteq \lien$ such that $[\lien_i,\lien_j] \subseteq \lien_{i+j}$ for all $i,j \in \Z$. The decomposition into subspaces $\lien_i$ is called a grading for the Lie algebra $\lien$. The grading is called positive if $\lien_i = 0$ for all $i \leq 0$. We say that an automorphism $\varphi \in \Aut(\lien)$ preserves the grading if $\varphi(\lien_i) = \lien_i$ for all $i \in \Z$.
\end{Def}
\noindent The existence of a grading $\lien = \displaystyle \bigoplus_{i \in \Z} \lien_i$ with $\lien_0 = 0$ implies that the Lie algebra $\lien$ is nilpotent. In the remaining part of this article we are only interested in nilpotent Lie algebras, since these are closely related to (infra-)nilmanifolds.

Sometimes a grading is defined as a decomposition \begin{align*} \lien = \bigoplus_{r \in \R} \lien_r\end{align*} which is indexed by real numbers rather than integers as in our definition. These definitions are equivalent because of \cite[Proposition 2.6.]{dip03-1}, even in the case of positive gradings and we call a grading indexed by real numbers an $\R$-grading. In fact, the proof of \cite[Proposition 2.6.]{dip03-1} shows that starting from an $\R$-grading, the grading of $\lien$ is formed by just renaming the subspaces $\lien_r$ with integer indices. We will use this equivalence of definitions later on, as it is often easier to construct an $\R$-grading for a Lie algebra. 

\smallskip

Graded Lie algebras are an important tool to study expanding maps. For real or complex Lie algebra's, \cite[Theorem 2.7.]{dl03-1} shows that the existence of an expanding automorphism is equivalent to the existence of a positive $\R$-grading (and thus a positive grading). For rational Lie algebras, it's easy to see that the existence of a grading induces an expanding automorphism: if $\lien_i$ is the decomposition, then for every $q \in \Q$ with $q > 1$ there exists an expanding automorphism map which maps an element $x \in \lien_i$ to $q^i x$. To translate this fact to $\mathcal{F}$-groups, we first need the following proposition:
\begin{Prop}
\label{constructgroup}
Let $\lien$ be a rational nilpotent Lie algebra and $N^\Q$ the corresponding nilpotent group. Identify the group $\Aut(\lien) = \Aut(N^\Q)$ as a subgroup of $\GL(n,\Q)$ by fixing a basis for $\lien$ as a vector space. Then there exists a full subgroup $N$ of $N^\Q$ such that every automorphism with integer entries induces an injective group morphism on $N$, i.e.\ if $\varphi \in \Aut(N^\Q) \subseteq \GL(n,\Q)$ has integer entries, then $\varphi(N) \subseteq N$.
\end{Prop}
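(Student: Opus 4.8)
The plan is to produce $N$ as the image under $\exp$ of a suitable \emph{rescaling} of the obvious integral lattice. Fix the basis $e_1,\dots,e_n$ of $\lien$ used to identify $\Aut(\lien)$ with a subgroup of $\GL(n,\Q)$, and let $\Lambda = \Z e_1 \oplus \cdots \oplus \Z e_n$ be the corresponding full $\Z$-lattice in $\lien$. If $\varphi \in \Aut(\lien)$ has integer entries then by definition $\varphi(\Lambda)\subseteq\Lambda$, so $\Lambda$ is already invariant under every integer-entry automorphism. The only thing that can go wrong is that $\exp(\Lambda)$ need not be a subgroup of $N^\Q$: the additive lattice $\Lambda$ is in general neither a Lie subring nor closed under the group law inherited from $N^\Q$.

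To repair this I would replace $\Lambda$ by $L = m\Lambda$ for a large integer $m$ and show that, for $m$ chosen correctly, $\exp(L)$ is a lattice group. Since $\lien$ is nilpotent, say of class $c$, the Baker--Campbell--Hausdorff product $x * y = \log(\exp(x)\exp(y))$ is a \emph{finite} sum $x*y = \sum_{d=1}^{c} T_d(x,y)$, where $T_d$ is the homogeneous degree-$d$ part, each $T_d$ being a fixed $\Q$-linear combination of $d$-fold iterated brackets, with $T_1(x,y)=x+y$. Expressing all these iterated brackets in the basis $e_1,\dots,e_n$ shows that there is a single positive integer $M$, depending only on the structure constants and on the finitely many BCH coefficients up to degree $c$, such that $T_d(a,b)\in \tfrac1M\Lambda$ for all $a,b\in\Lambda$ and all $d$. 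Now take $m$ to be any multiple of $M$. For $u=ma$ and $v=mb$ with $a,b\in\Lambda$, homogeneity gives $T_d(u,v)=m^d\,T_d(a,b)\in \tfrac{m^d}{M}\Lambda$; the linear term lies in $m\Lambda$, and for $d\ge 2$ one has $m^{d}/M = m\cdot(m^{d-1}/M)\in m\Z$ because $M\mid m\mid m^{d-1}$. Hence $u*v\in m\Lambda = L$, and since $-L=L$ the set $L$ is closed under the group law and under inversion. Therefore $N := \exp(L)$ is a subgroup of $N^\Q$ with $\log(N)=L$ an additive subgroup, i.e.\ a lattice group, and as $L$ spans $\lien$ over $\Q$ the subgroup $N$ is full.

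It remains to check invariance, which is now immediate: if $\varphi\in\Aut(\lien)\subseteq\GL(n,\Q)$ has integer entries then $\varphi(L)=\varphi(m\Lambda)=m\,\varphi(\Lambda)\subseteq m\Lambda = L$ by linearity, so $\varphi(N)=\exp(\varphi(L))\subseteq\exp(L)=N$ and $\varphi$ restricts to an injective group morphism of $N$. I expect the only real obstacle to be the closure of $L$ under the full group operation rather than merely under the bracket, since a lattice that is a Lie subring still need not exponentiate to a subgroup. The key observation that makes the argument go through is that the degree-$d$ BCH term scales like $m^d$ while we only need one extra factor of $m$ to absorb the fixed denominator $M$, so all higher terms are automatically swept into $L$ once $m$ is divisible by $M$.
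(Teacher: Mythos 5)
Your proof is correct, but it takes a genuinely different route from the paper's. The paper's proof never touches the Baker--Campbell--Hausdorff formula: it takes $N_0$ to be the full subgroup generated by $\exp(v_1),\dots,\exp(v_n)$ and then passes to the smallest lattice group $N_0^{\lat}$ containing it. Invariance there follows in two steps: an integer-entry $\varphi$ maps each $v_i$ into the $\Z$-span of the basis, which lies in the additive group $\log(N_0^{\lat})$, so $\varphi(N_0)\subseteq N_0^{\lat}$; then Proposition \ref{extension} gives $\varphi(N_0^{\lat})=\left(\varphi(N_0)\right)^{\lat}\subseteq N_0^{\lat}$. You instead construct the invariant lattice group explicitly: rescaling the integral lattice $\Lambda$ by a multiple $m$ of the common denominator $M$ of the finitely many BCH terms (finitely many by nilpotency) makes $L=m\Lambda$ closed under the group law, because of the homogeneity $T_d(ma,mb)=m^dT_d(a,b)$ together with $M\mid m\mid m^{d-1}$ for $d\geq 2$; both of these computations are right, and invariance then becomes trivial by linearity, since $\varphi(m\Lambda)\subseteq m\Lambda$. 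What the paper's route buys is brevity: it reuses the $(\cdot)^{\lat}$ machinery already set up in Section \ref{intro} and requires no choice of denominators. What your route buys is a self-contained, constructive proof with a completely explicit group, namely $\exp(m\Lambda)$, and an invariance argument that is visible at a glance. The one point you gloss over is that $\exp(L)$ must be \emph{finitely generated} in order to qualify as a lattice group, hence as a full subgroup, in the paper's sense; this is standard Mal'cev theory (for instance by induction on the nilpotency class using the centre, or \cite[Chapter 6]{sega83-1}), but it deserves an explicit sentence, since for you it is the only place where nontrivial background enters, playing the role that the existence of $N^{\lat}$ plays in the paper's argument.
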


\begin{proof}
Denote the basis for $\lien$ as $\{v_1, \ldots, v_n \}$ and take $N$ the full subgroup of $N^\Q$ generated by the elements $\exp(v_1), \hspace{0.5mm} \ldots  \hspace{0.5mm} , \exp(v_n)$. Since $N$ is an $\mathcal{F}$-group, we can consider $N^{\lat}$ and we claim that $N^{\lat}$ satisfies the properties of the Proposition. 

Let $\varphi$ be an automorphism of $\Aut(\lien)$ with integer entries in the basis $\{v_1, \ldots, v_n\}$, then we have to show that $\varphi(N^{\lat}) \subseteq N^{\lat}$. Since $\varphi$ has integer entries, we have that $\varphi(N) \subseteq N^{\lat}$, because $\varphi(v_i)$ is in the $\Z$-span of the basis $\{v_1, \ldots, v_n\}$ and $N^{\lat}$ is a lattice group. By Proposition \ref{extension} it follows that $$\varphi(N^{\lat}) = \left(\varphi\left(N\right)\right)^{\lat} \subseteq \left(N^{\lat}\right)^{\lat}$$ and the latter is of course equal to $N^{\lat}$. We conclude that $\varphi(N^{\lat}) \subseteq N^{\lat}$.
\end{proof}

It now easily follows that we can construct many expanding group morphisms if the corresponding Lie algebra is graded:

\begin{Cor}
\label{fp}
Let $\lien$ be a rational nilpotent Lie algebra with a positive grading and $N^\Q$ the corresponding nilpotent group. Then there exists a full subgroup $N$ and some $k >0$ such that for every prime $p$, there exists an injective group morphism $\varphi_p: N \to N$ with $\det (\varphi_p) = p^k$ which is also expanding. Moreover, these $\varphi_p$ commute with every automorphism that preserves the grading.
\end{Cor}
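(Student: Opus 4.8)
The plan is to produce the morphisms $\varphi_p$ directly from the grading automorphisms attached to the positive grading, and to extract a single full subgroup $N$ on which all of them act by invoking Proposition~\ref{constructgroup}. First I would fix a basis of $\lien$ adapted to the grading: choose a basis $v_1, \ldots, v_n$ so that each $v_j$ lies in one homogeneous piece $\lien_{d_j}$. Using this basis to identify $\Aut(\lien) \subseteq \GL(n,\Q)$, Proposition~\ref{constructgroup} furnishes a full subgroup $N$ (namely $N^{\lat}$ for the full subgroup generated by the $\exp(v_j)$) on which every automorphism of $\lien$ with integer entries induces an injective group morphism. The crucial feature is that this $N$ is obtained once and for all, independently of the prime $p$ chosen later.

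Next, for a scalar $q$ let $\psi_q$ be the linear map acting on $\lien_i$ as multiplication by $q^i$. The grading condition $[\lien_i,\lien_j] \subseteq \lien_{i+j}$ shows that $\psi_q$ is a Lie algebra automorphism, since for $x \in \lien_i$ and $y \in \lien_j$ one has $\psi_q([x,y]) = q^{i+j}[x,y] = [\psi_q(x),\psi_q(y)]$. I would now specialize $q = p$ for a prime $p$. In the adapted basis $\psi_p$ is diagonal with entries $p^{d_j}$, and positivity of the grading guarantees $d_j \geq 1$, so these entries are integers; hence $\psi_p$ has integer entries and, by the previous paragraph, induces an injective group morphism $\varphi_p : N \to N$.

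It then remains to check the three assertions. For the determinant, $\det(\psi_p) = \prod_j p^{d_j} = p^{k}$ with $k = \sum_j d_j = \sum_i i\,\dim_\Q(\lien_i)$; equivalently $k = \trace D$ for the grading derivation $D$ acting as $i$ on $\lien_i$. Since the determinant of $\varphi_p$ is by definition that of its unique extension $\varphi_p^\Q = \psi_p$, we get $\det(\varphi_p) = p^k$, and this exponent $k$ does not depend on $p$. For the expanding property, the eigenvalues of $\varphi_p$ are the $p^i$ with $\lien_i \neq 0$, and positivity forces $i \geq 1$, so every eigenvalue has absolute value $p^i \geq p > 1$. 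Finally, if $\alpha \in \Aut(\lien)$ preserves the grading, then on each $\lien_i$ the map $\psi_p$ is the scalar $p^i$, which commutes with $\alpha\rst{\lien_i}$; hence $\psi_p$ and $\alpha$ commute on every $\lien_i$ and therefore on $\lien$, giving the asserted commutation of the induced morphisms.

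The corollary is essentially bookkeeping once Proposition~\ref{constructgroup} is available; the one point that really needs care — and the main thing to get right — is the \emph{simultaneity}: a single full subgroup $N$ and a single exponent $k$ must serve all primes at once. This is exactly what the grading-adapted basis secures, since in that basis every $\psi_p$ is simultaneously integer and diagonal with the same pattern of exponents $d_1, \ldots, d_n$, so Proposition~\ref{constructgroup} applies uniformly and the determinant exponent $k = \trace D$ is forced to be independent of $p$.
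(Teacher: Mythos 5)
Your proposal is correct and follows essentially the same route as the paper: both define the diagonal map acting as $p^i$ on $\lien_i$, fix a basis adapted to the grading so that this map has integer entries, and invoke Proposition~\ref{constructgroup} to obtain a single full subgroup $N$ serving all primes at once. Your write-up merely makes explicit a few points the paper leaves implicit, such as the verification that $\psi_p$ respects the bracket and the formula $k = \sum_i i\,\dim_\Q(\lien_i)$.
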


\begin{proof}
Fix a positive grading $\lien_i$ for the rational Lie algebra $\lien$ corresponding to $N^\Q$ and fix a basis for $\lien$ such that every basis vector lies in a subspace $\lien_i$. For every prime $p$ there exists an automorphism $\varphi_p: \lien \to \lien$ such that $\varphi_p(x) = p^i x $ for all $x \in \lien_i$. From our choice of basis vectors, it follows that $\varphi_p$ has integer entries in the basis and that $\varphi_p$ commutes with every automorphism that preserves the grading. Note that every $\varphi_p$ is expanding since the grading is positive and $\det(\varphi_p) = p^k$ for some fixed $k >0$. Now take an $\mathcal{F}$-group as in Proposition \ref{constructgroup}, then the statement follows immediately.
\end{proof}

\smallskip

So starting from a nilpotent Lie algebra with positive grading, we can construct many expanding group morphisms on a full subgroup of the corresponding radicable hull. The main result of this section shows that for rational Lie algebras every expanding automorphism also induces a positive grading. The exact formulation is as follows:

\begin{Thm}
\label{Grading}
Let $\lien$ be a rational Lie algebra with an expanding automorphism $\varphi \in \Aut(\lien)$, then $\lien$ has a positive grading which is preserved by $\varphi$. Moreover if $\varphi$ commutes with an automorphism $\psi \in \Aut(\lien)$, then $\psi$ preserves the positive grading. 
\end{Thm}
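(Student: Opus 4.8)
The plan is to build the grading from the eigenspace decomposition of the semisimple part of $\varphi$, indexing the pieces by a Galois-invariant version of $\log\lvert\cdot\rvert$ of the eigenvalues. First I would pass to the semisimple part: write the multiplicative Jordan decomposition $\varphi=\varphi_s\varphi_u$ inside the algebraic group $\Aut(\lien)$. Since $\varphi_s$ is a polynomial $P(\varphi)$ with rational coefficients, it is again a Lie algebra automorphism, it is defined over $\Q$, it has exactly the same eigenvalues as $\varphi$ (so it is again expanding), and anything commuting with $\varphi$ automatically commutes with $\varphi_s$. This last point is what will give the ``moreover'' for free at the end, so I keep track of it.

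Next I would diagonalize over a number field $K$, Galois over $\Q$, containing all eigenvalues of $\varphi$, and decompose $\lien_K=\lien\otimes_\Q K=\bigoplus_\lambda V_\lambda$ into the eigenspaces of $\varphi_s$. Because $\varphi_s$ is an automorphism, $[V_\lambda,V_\mu]\subseteq V_{\lambda\mu}$: indeed $\varphi_s[x,y]=[\varphi_s x,\varphi_s y]=\lambda\mu[x,y]$ for $x\in V_\lambda$, $y\in V_\mu$. Thus any additive, positive labelling of the eigenvalues turns this decomposition into a positive grading. The naive labelling $\lambda\mapsto\log\lvert\lambda\rvert$ is additive on products and positive because $\varphi$ is expanding, but it is not constant on Galois orbits, so the resulting subspaces need not be defined over $\Q$; this is the real obstacle.

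To overcome it I would replace $\log\lvert\cdot\rvert$ by its Galois average
\begin{equation*}
w(\lambda)=\sum_{\sigma\in\Gal(K/\Q)}\log\bigl\lvert\sigma(\lambda)\bigr\rvert .
\end{equation*}
This $w$ is still additive (it extends to a homomorphism from the multiplicative group generated by the eigenvalues into $(\R,+)$) and is now Galois-invariant by construction. The crucial point is positivity: since the characteristic polynomial of $\varphi$ lies in $\Q[X]$, every conjugate $\sigma(\lambda)$ is again an eigenvalue of $\varphi$, hence also has absolute value $>1$, so every summand is positive and $w(\lambda)>0$. Grouping the eigenspaces by the value of $w$,
\begin{equation*}
\lien_{K,r}=\bigoplus_{w(\lambda)=r}V_\lambda ,
\end{equation*}
gives a positive $\R$-grading of $\lien_K$; Galois-invariance of $w$ together with $\sigma(V_\lambda)=V_{\sigma(\lambda)}$ shows that each $\lien_{K,r}$ is stable under $\Gal(K/\Q)$. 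By Galois descent each $\lien_{K,r}$ is then defined over $\Q$, so $\lien_r=\lien_{K,r}\cap\lien$ yields a positive $\R$-grading of $\lien$, which I convert into a positive $\Z$-grading by the renaming of \cite[Proposition 2.6.]{dip03-1}.

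Finally the invariance statements. Each $V_\lambda$ is $\varphi$-invariant (it is a generalized eigenspace of $\varphi$), so $\varphi$ preserves every $\lien_r$ and hence the grading. For the ``moreover'', if $\psi$ commutes with $\varphi$ then it commutes with $\varphi_s=P(\varphi)$; as $\varphi_s$ is semisimple, $\psi$ preserves each of its eigenspaces $V_\lambda$ and therefore each $\lien_r$. I expect the main obstacle to be precisely the rationality issue isolated above: the statement over $\R$ or $\C$ is the known result \cite[Theorem 2.7.]{dl03-1}, and the new content is the Galois-averaging trick, whose success hinges on the observation that expandingness of $\varphi$ is inherited by all Galois conjugates of its eigenvalues.
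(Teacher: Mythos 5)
Your proof is correct and follows essentially the same route as the paper: your Galois-averaged weight $w(\lambda)=\sum_{\sigma}\log\vert\sigma(\lambda)\vert$ is exactly $\log\vert N_E(\lambda)\vert$, the logarithm of the absolute value of the norm that the paper uses to build its rational automorphism $N(\varphi)$, and both arguments rest on the same key observation that rationality of the characteristic polynomial forces every Galois conjugate of an eigenvalue to be again an eigenvalue, hence of absolute value $>1$. The only divergence is packaging: the paper realizes the norm as a rational automorphism $N(\varphi)$ whose eigenspaces are automatically rational (and handles signs by squaring), whereas you group the eigenspaces over $K$ by the weight and recover rationality via Galois descent --- the resulting gradings coincide.
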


The proof of this theorem is based on some facts about algebraic number fields, so let us recall the most important definitions and properties. A number field is a field extension $\Q \subseteq E$ of finite degree. Let $E$ be a number field and $n$ the degree of the field extension, then there exist exactly $n$ monomorphisms $E \to \C$ and we write these monomorphisms as $\sigma_i: E \to \C$ with $i=1,\ldots, n$. The $E$-conjugates of $\alpha \in E$ are the complex numbers $\sigma_i(\alpha)$. For every $\alpha \in E$, we define the norm $N_E(\alpha)$ as the product of all its $E$-conjugates, so
\begin{align*}
N_E(\alpha) = \prod_{i=1}^n \sigma_i(\alpha).
\end{align*}
This gives us a map $N_E: E \to \Q$ which preserves the product, i.e.\ $N_E(\alpha \beta) = N_E(\alpha) N_E(\beta)$. 
If $\alpha,\beta \in E$ are $E$-conjugate, then $N_E(\alpha) = N_E(\beta)$. An element of $E$ is called an algebraic integer if its minimal polynomial has coefficients in $\Z$ and in this case we also have $N_E(\alpha) \in \Z$.

\smallskip

Now take $A \in \GL(n,\Q)$ a diagonalizable matrix with characteristic polynomial $p(X) \in \Q[X]$. Denote by $E \subseteq \C$ the minimal splitting field of $p(X)$, which is of course an algebraic number field. We define the matrix $N(A) \in \GL(n,\C)$ by replacing each eigenvalue $\lambda$ of $A$ by $N_E(\lambda)$; so if $v_\lambda$ is an eigenvector of $A$ for the eigenvalue $\lambda \in E$, then $N(A)(v_\lambda) = N_E(\lambda) v_\lambda$. It's easy to see that if $P \in \GL(n,\Q)$, then $N(P A P^{-1}) = P N(A) P^{-1}$, so $N(A)$ is invariant under change of basis. Note that if $B$ commutes with $A$, then $B$ also commutes with $N(A)$. 

A priori we only know that $N(A) \in \GL(n,\C)$, but we show that $N(A) \in \GL(n,\Q)$ for every diagonalizable matrix $A \in \GL(n,\Q)$. First decompose the characteristic polynomial $p(X)$ of $A$ into its $\Q$-irreducible components, so $$p(X) = p_1(X) \ldots p_k(X)$$ with $p_i(X) \in \Q[X]$ irreducible over $\Q$. Denote the degree of the polynomials $p_i$ as $n_i$ Since $A$ is diagonalizable, its rational canonical form is equal to $$L = \begin{pmatrix}L(p_1) & 0 & \dots & 0\\
0 & L(p_2) & \dots & 0\\
\vdots & \vdots & \ddots & 0 \\
0 & 0 & \dots & L(p_n) \end{pmatrix},$$ so there exists $P \in \GL(n,\Q)$ such that $P A P^{-1} = L$. Now take $\lambda_i$ a root of the polynomial $p_i$. Note that $N_E(\lambda_i)$ doesn't depend on the choice of the root $\lambda_i$, since all roots of $p_i$ are $E$-conjugate. It's easy to see then that $N(L)$ is equal to $$N(L) = \begin{pmatrix}N_E(\lambda_1) \I_{n_1} & 0 & \dots & 0\\
0 & N_E(\lambda_2) \I_{n_2} & \dots & 0\\
\vdots & \vdots & \ddots & 0 \\
0 & 0 & \dots & N_E(\lambda_k) \I_{n_k} \end{pmatrix}  \in \GL(n,\Q).$$ From this it follows that $N(A) = P^{-1} N(L) P \in \GL(n,\Q)$.

Now assume $\lien$ is a rational Lie algebra and $\varphi \in \Aut(\lien)$ an automorphism. Then $N(\varphi)$ is a linear map $\lien \to \lien$ and we claim that it is also a Lie algebra automorphism. Consider the complexification $\lien^\C = \C \otimes \lien$, then it suffices to show that $N(\varphi)^\C = 1_\C \otimes N(\varphi)$ is a Lie algebra automorphism. By the linearity of the bracket, it is sufficient to show that $N(\varphi)^\C$ preserves the bracket for a basis of $\lien^\C$. Since there exists a basis of eigenvectors for $\varphi^\C$, we show that $N(\varphi)^\C$ preserves the bracket of all eigenvectors. Let $v_\lambda$ and $v_\mu$ be two eigenvectors of $\varphi^\C$, then $[v_\lambda, v_\mu]$ is an eigenvector of eigenvalue $\lambda \mu$. So $$[N(\varphi)(v_\lambda),N(\varphi)(v_\mu) ] = N_E(\lambda) N_E(\mu) [v_\lambda,v_\mu] = N_E(\lambda \mu) [v_\lambda,v_\mu] = N(\varphi) ([v_\lambda,v_\mu]),$$ with $E$ a splitting field for the characteristic polynomial of $\varphi$.

\smallskip

We conclude this discussion with the following proposition:

\begin{Prop}
Let $\varphi \in \Aut( \lien)$ be an automorphism of a rational Lie algebra. Then $N(\varphi) \in \Aut(\lien)$ is an automorphism with only rational eigenvalues and which commutes with every automorphism commuting with $\varphi$. Moreover, if $\varphi$ is expanding, then $N(\varphi)$ is also expanding. 
\end{Prop}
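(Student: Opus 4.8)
The plan is to observe that most of the proposition is already established in the discussion preceding its statement, and then to supply the three genuinely new assertions. In the paragraphs above it is shown, for a diagonalizable $\varphi \in \Aut(\lien)$, that $N(\varphi)$ lies in $\GL(n,\Q)$ and preserves the Lie bracket; hence the claim $N(\varphi) \in \Aut(\lien)$ needs no further argument. I would take $\varphi$ diagonalizable throughout, since that is what makes $N(\varphi)$ meaningful; to cover a general expanding $\varphi$ one first replaces it by its semisimple part, which has the same eigenvalues (so is again expanding) and is again a Lie algebra automorphism.

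First I would dispatch the rationality of the eigenvalues. By the very construction of $N(\varphi)$, its eigenvalues are exactly the numbers $N_E(\lambda)$, where $\lambda$ ranges over the eigenvalues of $\varphi$ and $E$ is the splitting field of the characteristic polynomial of $\varphi$. Since $N_E$ is the norm map $N_E : E \to \Q$, every $N_E(\lambda)$ is rational, which is the claim. Next I would treat the commutation property: if $\psi \in \Aut(\lien)$ commutes with $\varphi$, then over the complexification $\psi^\C$ preserves each eigenspace $V_\lambda$ of $\varphi^\C$, because $\varphi^\C(\psi^\C v) = \psi^\C(\varphi^\C v) = \lambda \psi^\C v$ for $v \in V_\lambda$. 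As $N(\varphi)^\C$ acts on $V_\lambda$ as the scalar $N_E(\lambda)$, it commutes with $\psi^\C$ on each $V_\lambda$, and since these eigenspaces span $\lien^\C$ by diagonalizability, $N(\varphi)$ commutes with $\psi$; this is just the remark ``if $B$ commutes with $A$ then $B$ commutes with $N(A)$'' recorded earlier.

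The heart of the argument is the expanding statement. Assume $\varphi$ is expanding and fix an eigenvalue $\lambda$, so $|\lambda| > 1$. Writing $N_E(\lambda) = \prod_i \sigma_i(\lambda)$ with the product over all monomorphisms $\sigma_i : E \to \C$, the crucial point is that every $\sigma_i(\lambda)$ is itself an eigenvalue of $\varphi$: since $\sigma_i$ fixes $\Q$, the number $\sigma_i(\lambda)$ is a root of the minimal polynomial $m_\lambda \in \Q[X]$ of $\lambda$, and $m_\lambda$ divides the characteristic polynomial $p$ of $\varphi$ because $p(\lambda) = 0$; hence $\sigma_i(\lambda)$ is a root of $p$, i.e.\ an eigenvalue of $\varphi$. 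Therefore $|\sigma_i(\lambda)| > 1$ for every $i$, so that
\[
|N_E(\lambda)| = \prod_i |\sigma_i(\lambda)| > 1.
\]
As $\lambda$ was arbitrary, every eigenvalue of $N(\varphi)$ has absolute value exceeding $1$, and $N(\varphi)$ is expanding.

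I expect the only real obstacle to be the key observation in the last paragraph: that the $E$-conjugates of an eigenvalue of $\varphi$ are again eigenvalues of $\varphi$. Once this is in hand, the norm of an eigenvalue is a product of complex numbers each of modulus $> 1$, and the expanding conclusion drops out at once; the rationality and commutation claims are then routine consequences of the definition of the norm map and of the earlier remark on operators commuting with $\varphi$.
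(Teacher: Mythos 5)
Your proof is correct and follows essentially the same route as the paper: the paper's own proof also reduces everything to the expanding claim and shows $\vert N_E(\lambda)\vert > 1$ by writing $N_E(\lambda)$ as the product of the $E$-conjugates of an eigenvalue $\lambda$ of $\varphi$. The only differences are that you spell out the step the paper treats as immediate --- that the $E$-conjugates of an eigenvalue of $\varphi$ are again roots of its characteristic polynomial, hence eigenvalues of absolute value $> 1$ --- and that you address up front the diagonalizability caveat (passing to the semisimple part), which the paper instead handles in the proof of Theorem \ref{Grading}.
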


\begin{proof}
The only thing left to show is the last statement, namely that $N(\varphi)$ is expanding if $\varphi$ is expanding. The eigenvalues of $N(\varphi)$ are equal to $N_E(\lambda)$ with $\lambda$ an eigenvalue of $\varphi$ and $E$ the splitting field of the characteristic polynomial of $\varphi$. So it suffices to show that if $\lambda$ and all its $E$-conjugates are $>1$ in absolute value, then also $N_E(\lambda) > 1$. Since $N_E(\lambda)$ is just the product of the $E$-conjugates of $\lambda$, this follows immediately.\end{proof}

From this proposition, the proof of Theorem \ref{Grading} now follows directly:

\begin{proof}[Proof of Theorem \ref{Grading}]
Since $\Aut(\lien)$ is an algebraic group, we can always assume that $\varphi$ is diagonalizable. Then $N(\varphi)$ is an expanding automorphism with only eigenvalues in $\Q$. By squaring $\varphi$ if necessary we can assume that all eigenvalues of $N(\varphi)$ are positive. Since all eigenvalues are rational, the corresponding eigenspaces are rational subspace of $\lien$. Take the $\R$-grading $\lien = \bigoplus_{r \in \R} V_r$ where $V_r$ is the eigenspace of eigenvalue $e^r$. It's easy to see that this is indeed an $\R$-grading. Since $N(\varphi)$ is expanding, it follows that the grading is positive. The existence of a positive $\R$-grading implies the existence of a positive grading by just renaming the subspaces $V_r$, as explained above. 

If $\psi$ commutes with $\varphi$, it also commutes with $N(\varphi)$ and thus it preserves the eigenspaces of $N(\varphi)$. Since the grading is given by eigenspaces of $N(\varphi)$, the last statement follows.
\end{proof}


\section{Algebraic characterization for expanding maps}
\label{algexp}
In this section we combine the previous results to prove our algebraic characterization of infra-nilmanifolds admitting an expanding map. We start with the easier case of nilmanifolds, which follows almost directly now:

\begin{Thm}
\label{expnil}
Let $N \backslash G$ be a nilmanifold with corresponding rational Lie algebra $\lien$, then the following are equivalent:
\begin{enumerate}
\item $N \backslash G$ admits an expanding map;
\item $\lien$ has a positive grading;
\item there exists an expanding automorphism $\varphi \in \Aut(\lien)$.
\end{enumerate}
\end{Thm}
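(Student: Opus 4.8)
The plan is to prove the three equivalences by closing the cycle $(1) \Rightarrow (3) \Rightarrow (2) \Rightarrow (1)$, noting that $(3) \Rightarrow (2)$ is already Theorem \ref{Grading} and its converse is immediate. I would dispose of the easy equivalence $(2) \Leftrightarrow (3)$ first. The direction $(3) \Rightarrow (2)$ is exactly Theorem \ref{Grading}. For $(2) \Rightarrow (3)$, given a positive grading $\lien = \bigoplus_{i > 0} \lien_i$, I fix a rational $q > 1$ and define $\varphi \in \Aut(\lien)$ by $\varphi(x) = q^i x$ for $x \in \lien_i$; the condition $[\lien_i,\lien_j] \subseteq \lien_{i+j}$ guarantees $\varphi$ respects the bracket, and since every index is positive all eigenvalues $q^i$ exceed $1$, so $\varphi$ is expanding. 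This is the construction already indicated in Section \ref{SGrad}.

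Next, $(1) \Rightarrow (3)$. Suppose $N \backslash G$ admits an expanding map. By Gromov's Theorem \ref{Gromov} it is topologically conjugate, hence homeomorphic, to an infra-nilmanifold $\Gamma \backslash G'$ carrying an expanding affine infra-nilendomorphism $\bar\alpha$. The homeomorphism forces $\Gamma \cong \pi_1(N \backslash G) = N$, and since $N$ is torsion-free nilpotent its Fitting subgroup is all of it; hence $\Gamma$ has trivial holonomy, $\Gamma \backslash G'$ is itself a nilmanifold, and its rational Lie algebra is isomorphic to $\lien$ (commensurable $\mathcal{F}$-groups share their radicable hull). The injective endomorphism $\gamma \mapsto \alpha \gamma \alpha^{-1}$ of $\Gamma$ extends to an automorphism of the rational Lie algebra; writing $\alpha = (g,\delta)$, this automorphism is $\mathrm{Ad}(g) \circ d\delta$. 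Because $\mathrm{Ad}(g)$ is unipotent on a nilpotent Lie algebra it acts as the identity on the associated graded of the lower central series, so $\mathrm{Ad}(g)\circ d\delta$ has the same eigenvalues as the linear part $d\delta$. As $\bar\alpha$ is expanding these all have absolute value $>1$, and transporting along the isomorphism yields an expanding automorphism of $\lien$.

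The main work is $(2) \Rightarrow (1)$, which is where I expect the real obstacle. From the positive grading, Corollary \ref{fp} produces a full subgroup $N'$ of $N^\Q$ and, for every prime $p$, an expanding injective morphism $\varphi_p : N' \to N'$ with $\det \varphi_p = p^k$. The difficulty is that $N'$ need not be our given lattice $N$, so $\varphi_p$ does not directly give a self-map of $N \backslash G$. I would resolve this with Theorem \ref{primes} applied to $N_1 = N'$ and $N_2 = N$, both having radicable hull $N^\Q$: it yields a finite set of primes $p_1, \dots, p_l$ such that any injective morphism of $N'$ whose determinant avoids these primes has some power mapping $N$ into $N$. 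Choosing a prime $p \notin \{p_1, \dots, p_l\}$ makes $\det \varphi_p = p^k$ coprime to every $p_j$, so there is $m > 0$ with $\varphi_p^m(N) \subseteq N$. This power is still expanding, and therefore induces an expanding affine infra-nilendomorphism (with trivial translation part) on $N \backslash G$, closing the cycle.

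The crux, and the reason the argument works at all, is precisely that Corollary \ref{fp} supplies expanding morphisms of arbitrarily large prime determinant: this is exactly the freedom needed to dodge the finitely many ``bad'' primes produced by Theorem \ref{primes} when transferring the dynamics from the auxiliary lattice $N'$ back to the prescribed lattice $N$. The eigenvalue bookkeeping in $(1) \Rightarrow (3)$ is the only other delicate point, but it is handled cleanly by the unipotence of $\mathrm{Ad}(g)$.
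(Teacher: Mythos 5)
Your proposal is correct and takes essentially the same route as the paper: $(1) \Rightarrow (3)$ via Theorem \ref{Gromov}, $(3) \Rightarrow (2)$ via Theorem \ref{Grading}, and $(2) \Rightarrow (1)$ via Corollary \ref{fp} combined with Theorem \ref{primes}, choosing a prime outside the finite bad set. The extra details you supply for $(1) \Rightarrow (3)$ (identifying the covering infra-nilmanifold as a nilmanifold with Lie algebra $\lien$, and the unipotence of $\mathrm{Ad}(g)$ to equate eigenvalues with those of the linear part) are points the paper compresses into ``follows directly from Theorem \ref{Gromov},'' and they are handled correctly.
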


\begin{proof}
The fact that $1.$ implies $3.$ follows directly from Theorem \ref{Gromov}. In Theorem \ref{Grading} we exactly showed that $3.$ implies $2.$. So the only implication left to show is that $2.$ implies $1.$. 

To show this implication, fix a positive grading of $\lien$ and a full subgroup $N_0$ of $N^\Q$ as in Corollary \ref{fp}. Since $N_0$ and $N$ have the same radicable hull, there exists primes $p_1,\ldots,p_l$ as in Theorem \ref{primes}. Now take $p$ a prime such that $p \neq p_j$ for all $j \in \{1, \ldots, l\}$ and the group morphism $\varphi_p: N_0 \to N_0$ as in Corollary \ref{fp}. Because of our choice of $p$, there exists some power of $\varphi_p$ such that $\varphi_p^k(N) \subseteq N$ and the induced map on $N \backslash G$ is an expanding map.
\end{proof}

\noindent In particular, this theorem implies that the existence of an expanding map on a nilmanifold is invariant under commensurability of the fundamental group. This answers a question of \cite{bele03-1}. This theorem is also the first step towards another problem stated in \cite{bele03-1}, namely the question if the existence of an expanding map depends only on the complexification $\C \otimes \lien$ of the rational Lie algebra $\lien$. This problem is now translated to the following: if a complex Lie algebra has a positive grading, does every rational form of this Lie algebra have a positive grading?

\smallskip

For infra-nilmanifolds, we have a similar theorem but with an extra condition coming from the rational holonomy representation:

\begin{Thm}
\label{expinfra}
Let $\Gamma \backslash G$ be an infra-nilmanifold with associated rational holonomy representation $F \le \Aut(\lien)$. Then the following are equivalent:
\begin{enumerate}
\item $\Gamma \backslash G$ admits an expanding map;
\item $\lien$ has a positive grading, preserved by every automorphism in $F$;
\item there exists an expanding automorphism $\varphi \in \Aut(\lien)$ which commutes with every element of the holonomy group $F$.
\end{enumerate} 
\end{Thm}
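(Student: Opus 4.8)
The plan is to establish the cycle $1 \Rightarrow 3 \Rightarrow 2 \Rightarrow 1$, in each step invoking the machinery of the previous sections. The implication $3 \Rightarrow 2$ is immediate from the last sentence of Theorem \ref{Grading}: an expanding $\varphi \in \Aut(\lien)$ yields a positive grading of $\lien$ that $\varphi$ preserves, and since by hypothesis every $f \in F$ commutes with $\varphi$, each such $f$ preserves this grading. Hence only $1 \Rightarrow 3$ and $2 \Rightarrow 1$ need an argument.

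For $1 \Rightarrow 3$ I would first apply Gromov's Theorem \ref{Gromov} to replace the expanding map by an expanding affine infra-nilendomorphism, and then the Generalized Second Bieberbach Theorem \ref{Bieberbach} to realize the corresponding conjugation by an affine transformation $\alpha = (a,\delta) \in N^\Q \rtimes \Aut(\lien)$ with $\alpha \Gamma \alpha^{-1} \subseteq \Gamma$; here the linear part $\delta \in \Aut(\lien)$ is expanding, its eigenvalues being precisely the eigenvalues of the map. The crucial observation is a holonomy computation: for $(n,f) \in \Gamma \subseteq N^\Q \rtimes F$ one finds $\alpha (n,f) \alpha^{-1} = (\,\cdot\,, \delta f \delta^{-1})$, so the holonomy group of $\alpha \Gamma \alpha^{-1}$ equals $\delta F \delta^{-1}$. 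The inclusion $\alpha \Gamma \alpha^{-1} \subseteq \Gamma$ then forces $\delta F \delta^{-1} \subseteq F$, and finiteness of $F$ gives $\delta F \delta^{-1} = F$, i.e.\ $\delta$ normalizes $F$. Conjugation by $\delta$ is thus an automorphism of the finite group $F$, of some order $m$, and $\varphi := \delta^m$ centralizes $F$ while staying expanding, which is exactly statement $3$.

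The substance of the theorem is $2 \Rightarrow 1$. Starting from an $F$-invariant positive grading, Corollary \ref{fp} provides a full subgroup $N_0 \le N^\Q$ and, for each prime $p$, an expanding injective morphism $\varphi_p : N_0 \to N_0$ with $\det \varphi_p = p^k$ that commutes with every grading-preserving automorphism, in particular with all of $F$. Because $\varphi_p$ commutes with $F$, a direct computation gives $\alpha (n,f) \alpha^{-1} = (\varphi_p(n), f)$ for $\alpha = (e, \varphi_p)$, so conjugation by $\alpha$ is an injective homomorphism of $N^\Q \rtimes F$ fixing holonomy. As in the proof of Theorem \ref{expnil}, choosing $p$ outside the finite exceptional set of Theorem \ref{primes} lets me pass to a power of $\varphi_p$ mapping the Fitting lattice $N$ into itself.

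The genuine difficulty, which does not arise for nilmanifolds, is that stabilizing $N$ is insufficient: inside $N^\Q \rtimes F$ the group $\Gamma$ is pinned down by $N$ together with the finitely many cosets $t_f N$ coming from the holonomy lifts $(t_f,f) \in \Gamma$, and $\alpha \Gamma \alpha^{-1} \subseteq \Gamma$ demands in addition that a power of $\varphi_p$ send each $t_f$ into $N t_f$. To arrange this I would also require $p$ to be coprime to $[\Lambda : N]$ and to make a power $\psi$ of $\varphi_p$ stabilize $\Lambda := \langle N, t_f : f \in F \rangle$ (again using Theorem \ref{primes}). Since $\det \psi$ is then a power of $p$ coprime to $[\Lambda : N]$, an index count shows that the endomorphism induced by $\psi$ on the finite group $\Lambda / N$ has trivial kernel, hence is an automorphism; a further power $\psi'$ therefore acts as the identity on $\Lambda / N$ and fixes every coset $t_f N$. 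Then $\alpha = (e, \psi')$ satisfies $\alpha \Gamma \alpha^{-1} \subseteq \Gamma$ with expanding linear part $\psi'$, producing the required expanding affine infra-nilendomorphism. I expect this coset-fixing step, along with the attendant bookkeeping of left versus right cosets and the normality of $N$ in $\Lambda$ (handled by replacing $N$ with a finite-index normal subgroup), to be the main obstacle of the whole proof.
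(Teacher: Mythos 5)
Your proposal is correct and takes essentially the same route as the paper: $3 \Rightarrow 2$ via Theorem \ref{Grading}, $1 \Rightarrow 3$ via Gromov's Theorem \ref{Gromov} and Theorem \ref{Bieberbach} followed by the power trick on the finite group $F$ (you merely spell out the ``easy to see'' fact that $\delta F \delta^{-1} = F$), and $2 \Rightarrow 1$ via Corollary \ref{fp}, Theorem \ref{primes}, and the coprime-determinant index count forcing a power of the morphism to act as the identity on the finite quotient of your $\Lambda$ by a normal finite-index subgroup of $N$, which is exactly the paper's argument with $N_1/N_2$. The differences are purely notational ($\Lambda$, $t_f$, $\psi'$ for the paper's $N_1$, $n_j$, power of $\varphi$).
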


\begin{proof}
The implication from $3.$ to $2.$ is again immediate from Theorem \ref{Grading}. We will first show that $1.$ implies $3.$ and conclude with the implication from $2.$ to $1.$.

\smallskip

So first assume that $\Gamma \backslash G$ admits an expanding map. Because of Theorem \ref{Gromov} and Theorem \ref{Bieberbach}, we can assume that this expanding map is given by an affine infra-nilendomorphism $\bar{\alpha}$ with $\alpha = (g,\delta) \in N^\Q \rtimes \Aut(\lien)$. It's easy to see that $\delta F \delta^{-1} = F$ and thus we can assume $\delta$ commutes with every element of $F$ by replacing $\alpha$ by some power of $\alpha$ if necessary. The automorphism $\delta$ is then the expanding automorphism we need.

\smallskip

Next assume that there exists a positive grading, preserved by every element of $F$. By Corollary \ref{fp}, we know that there exists a full subgroup $N_0$ of $N^\Q$ with for every prime $p$ an expanding group morphism $\varphi_p: N_0 \to N_0$ such that $\det( \varphi_p)$ is a power of $p$ and $\varphi_p$ commutes with every element of $F$. Write $F = \{f_1, \ldots, f_l\}$ and fix a finite number of elements $n_j \in N^\Q$ such that $$(n_1,f_1), \ldots, (n_l,f_l) \in \Gamma,$$ where we consider $\Gamma \subseteq N^\Q \rtimes F$. So every element $\gamma \in \Gamma$ can be written as $\gamma = (n,1) (n_j,f_j)$ for some $j\in \{1, \ldots, l\}$ and vice versa, if an element of $N^\Q \rtimes F$ can be written in this form, it is an element of $\Gamma$. Take $N_1$ the full subgroup of $N^\Q$ generated by $N$ and the elements $n_1, \ldots, n_l$ and take $N_2$ a normal subgroup of finite index in $N_1$ such that $N_2 \subseteq N$. Now take $\varphi = \varphi_p: N_0 \to N_0$ with $p$ different from all the primes we get by Theorem \ref{primes} for $N_1, N_2$ and $N$ (where we take the first group equal to $N_0$ each time) and also $p \nmid [N_1:N_2]$. By taking some power of $\varphi$ we can thus assume that $\varphi(N_1) \subseteq N_1, \varphi(N_2) \subseteq N_2$ and $\varphi(N) \subseteq N$. 

Consider now the group morphism that $\varphi$ induces on $\faktor{N_1}{N_2}$. We claim that this group morphism is injective (and thus an isomorphism since the group is finite). For this we have to show that $\varphi(N_1) \cap N_2 = \varphi(N_2)$. Since $N_2 \subseteq N_1$, we obviously have $\varphi(N_2) \subseteq \varphi(N_1) \cap N_2$, so it suffices to show that both subgroups have the same index in $N_2$. From Proposition \ref{extension} we know that $\varphi(N_2)$ is a subgroup of index $\vert \det \left(\varphi\right) \vert$ in $N_2$. Similarly, $\varphi(N_1)$ is a subgroup of index $\vert \det \left(\varphi\right) \vert$ in $N_1$ and by Lemma \ref{index} and our choice of $p$, we get that $\varphi(N_1) \cap N_2$ is also a subgroup of index $\vert \det( \varphi) \vert$ in $N_2$. The claim thus follows.

By taking some power of $\varphi$, we can assume that $\varphi$ induces the identity on $\faktor{N_1}{N_2}$. Equivalently, we have for every $n_j$ that $\varphi(n_j) = \tilde{n}_j n_j$ for some $\tilde{n}_j \in N_2 \subseteq N$. This implies that for every $\gamma = (n, 1) (n_j,f_j) \in \Gamma$
\begin{align*}
(1,\varphi)\gamma (1,\varphi^{-1}) &=(1,\varphi) (n, 1) (n_j,f_j)(1,\varphi^{-1}) \\ &=  (\varphi(n),1)(\varphi(n_j),\varphi f_j \varphi^{-1}) \\ & = (\varphi(n) \tilde{n}_j, 1) (n_j,f_j) \in \Gamma\end{align*}
since $\varphi(n) \tilde{n}_j \in N$. We conclude that $\varphi \Gamma \varphi^{-1} \subseteq \Gamma$ and thus $\varphi$ induces an expanding map on the infra-nilmanifold $\Gamma \backslash G$.
%
%
\end{proof}

As we explained above, every infra-nilmanifold $\Gamma \backslash G$ is finitely covered by a nilmanifold, for example the nilmanifold $N \backslash G$ corresponding to the Fitting subgroup $N$ of $\Gamma$. The lift of an expanding map is again an expanding map and thus if $\Gamma \backslash G$ admits an expanding map, also $N \backslash G$ admits one. This fact also follows trivially from Theorem \ref{expinfra}, since the holonomy group $F$ only puts extra conditions in comparison to Theorem \ref{expnil}.

The only known examples of infra-nilmanifolds not admitting an expanding map are constructed as the finite quotient of a nilmanifold which doesn't admit an expanding map. We conjecture that this is indeed the only possibility for an infra-nilmanifold without expanding map:
\begin{con}
An infra-nilmanifold $\Gamma \backslash G$ admits an expanding map if and only the nilmanifold $N \backslash G$ admits an expanding map.
\end{con}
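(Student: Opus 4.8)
The statement is an equivalence, and one direction is immediate: if $\Gamma \backslash G$ admits an expanding map then, by Theorem \ref{expinfra}, the Lie algebra $\lien$ carries a positive grading (preserved by $F$, but we discard that extra information), so by Theorem \ref{expnil} the nilmanifold $N \backslash G$ admits an expanding map. Concretely this is just lifting the expanding map along the finite cover $N \backslash G \to \Gamma \backslash G$. Thus the whole content of the conjecture lies in the reverse implication, and combining the characterizations of Theorems \ref{expnil} and \ref{expinfra} translates it into a purely Lie-algebraic statement: \emph{if the rational nilpotent Lie algebra $\lien$ admits a positive grading, then it admits a positive grading that is preserved by every element of the rational holonomy group $F \le \Aut(\lien)$.} The plan is to attack this reformulation.

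The natural framework is to replace gradings by derivations. A positive grading of $\lien$ is the same datum as a semisimple derivation $D \in \operatorname{Der}(\lien)$ whose eigenvalues are positive rational numbers (the eigenspaces being the graded pieces, after renaming the real indices by integers as in Section \ref{SGrad}); an $F$-invariant grading corresponds to such a $D$ commuting with $F$, i.e.\ lying in the fixed subalgebra $\operatorname{Der}(\lien)^F$. So I would aim to produce a semisimple derivation with positive eigenvalues inside $\operatorname{Der}(\lien)^F$, starting from an arbitrary one in $\operatorname{Der}(\lien)$. The key idea is an averaging argument carried out inside a single torus, where it cannot fail: elements of a torus are simultaneously diagonalizable, so on each common weight space $\lien_\chi$ a derivation $D'$ acts by the scalar $\chi(D')$, positivity of eigenvalues is the system of linear inequalities $\chi(D')>0$, and this cuts out an open convex cone. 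If $\mathfrak{t} \subseteq \operatorname{Der}(\lien)$ is a maximal torus that is \emph{stable} under $F$, i.e.\ $f \mathfrak{t} f^{-1} = \mathfrak{t}$ for all $f \in F$, then $F$ permutes the weights, hence preserves this cone, and for any positive $D \in \mathfrak{t}$ the average $\bar D = \tfrac{1}{|F|}\sum_{f \in F} f D f^{-1}$ is again semisimple, has $\chi(\bar D) = \tfrac{1}{|F|}\sum_f (f^{-1}\chi)(D) > 0$ for every weight $\chi$, and is fixed by $F$. Thus $\bar D$ would be exactly the $F$-invariant positive grading we want, and its eigenvalues, being averages of the positive rational eigenvalues of $D$, remain positive and rational.

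The two steps to secure are therefore the existence of an $F$-stable maximal torus and the rationality of the output. For the first, I would invoke the structure theory of linear algebraic groups in characteristic zero: a finite group of automorphisms of a connected algebraic group stabilizes a Borel subgroup and a maximal torus of it, applied to the conjugation action of $F$ on the identity component $\Aut(\lien)^0$. Over an algebraically closed field this, together with the averaging above (after conjugating a given positive derivation into the stable torus, which is possible since all maximal tori are conjugate), actually settles the complexified statement: the complexification $\lien^\C$ admits an $F$-invariant positive grading as soon as it admits any positive grading.

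The main obstacle — and the reason the statement remains a conjecture — is descent to $\Q$. The element that conjugates a given rational positive derivation into the $F$-stable maximal torus, and the torus itself, are in general only defined over $\bar\Q$, so the averaged derivation $\bar D$ need not be a \emph{rational} derivation even though its eigenvalues are rational; the grading it produces lives a priori only over $\C$. Manufacturing an $F$-invariant positive grading of the rational form $\lien$ itself is precisely the hard point, and it is entangled with the separate open problem raised after Theorem \ref{expnil}, namely whether a positive grading of a complex Lie algebra descends to every rational form. In the special case where $\lien$ carries a \emph{characteristic} positive grading — for instance a stratified Lie algebra, whose lower central series already furnishes a positive grading that every automorphism, and in particular every element of $F$, must preserve — the reverse implication holds unconditionally, which is consistent with the conjecture and suggests that the rational descent of gradings is the only missing ingredient.
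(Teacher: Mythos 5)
You should be aware at the outset that the paper contains no proof of this statement: it is explicitly labelled a \emph{conjecture}, and at the point where it appears the authors only record (i) that one direction is trivial, since the lift of an expanding map along the finite cover $N \backslash G \to \Gamma \backslash G$ is again expanding (equivalently, condition $2.$ of Theorem \ref{expinfra} is stronger than condition $2.$ of Theorem \ref{expnil}), (ii) that via Theorems \ref{expnil} and \ref{expinfra} the conjecture is equivalent to the purely Lie-algebraic statement that a positive grading of $\lien$ can be replaced by one preserved by the finite group $F \le \Aut(\lien)$, and (iii) that the conjecture is known for Lie algebras of homogeneous type by \cite[Theorem 5.1.]{dl03-1}. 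Your treatment of the easy direction is correct, and your reformulation coincides exactly with the paper's own reformulation; your closing remark about Lie algebras with a characteristic positive grading (stratified/lower-central-series gradings) is precisely the homogeneous-type case the paper cites. So as an analysis your proposal is faithful to the paper; as a proof it necessarily has a gap, and you have identified the main one yourself: the torus-averaging argument produces an $F$-invariant positive grading only of $\lien^\C$ (or of a form over $\bar{\Q}$), because the $F$-stable maximal torus and the element conjugating the given positive derivation into it need not be defined over $\Q$, so $\bar D$ need not preserve the rational form $\lien$. That rational-descent step is exactly the open content of the conjecture, and your attempt does not close it.

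One additional soft spot you should not gloss over: even the complexified statement is not fully secured by your sketch. The existence of an $F$-stable maximal torus for an \emph{arbitrary} finite group $F$ of algebraic automorphisms requires justification. Steinberg's theorem gives a stable Borel subgroup and maximal torus for a \emph{single} semisimple automorphism (hence, by induction, for supersolvable $F$), but the general statement you invoke does not follow from the standard fact that the normalizer of a maximal torus $T$ in $\Aut(\lien^\C)^0 \cdot F$ meets every connected component: that only yields, for each $f \in F$, some $g_f \in \Aut(\lien^\C)^0$ with $g_f f$ normalizing $T$, and these corrected elements need not form a group, let alone equal $F$ — while the grading you need must be preserved by $F$ itself (adjusting each $f$ separately changes the holonomy representation in an uncontrolled way). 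So to make even the complex half of your argument rigorous you would need to cite or prove a genuine result on finite group actions on connected algebraic groups in characteristic zero, and then the rational descent problem — which is entangled with the separate open question, raised after Theorem \ref{expnil}, of whether positive gradings descend to all rational forms — still remains. In short: your proposal correctly reduces the conjecture to the same algebraic statement the paper does, proves neither it nor its complex analogue completely, and no blind attempt could have done more, since the statement is open in the paper.
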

For Lie algebras of homogeneous type (see \cite{dl03-2} for the definition) the conjecture is true, see \cite[Theorem 5.1.]{dl03-1}. Because of Theorem \ref{expnil} and Theorem \ref{expinfra}, we can reformulate this conjecture in the following form:

\begin{con}
Let $\lien$ be a rational Lie algebra and $F \le \Aut(\lien)$ a finite group of automorphisms. If there exists an expanding automorphism of $\lien$, then there also exists an expanding automorphism of $\lien$ which commutes with every element of $F$. Equivalently, if there exists a positive grading of $\lien$, then there also exists a positive grading which is preserved by every element of $F$.
\end{con}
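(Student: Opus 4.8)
The plan is to translate everything into the language of derivations and then attempt to impose $F$-invariance by averaging. Recall that a positive grading $\lien = \bigoplus_{i} \lien_i$ is the same datum as a semisimple derivation $D \in \operatorname{Der}(\lien)$ acting as the scalar $i$ on $\lien_i$; positivity means all eigenvalues of $D$ are positive, and for a $\Z$-grading they are positive integers. Under this dictionary an automorphism $\psi$ preserves the grading precisely when $\psi D \psi^{-1} = D$, so that $D$ lies in the fixed subalgebra $\operatorname{Der}(\lien)^F = \{ D : fDf^{-1} = D \text{ for all } f \in F \}$. The conjecture is therefore equivalent to the purely Lie-theoretic assertion that if $\operatorname{Der}(\lien)$ contains a semisimple derivation with positive rational eigenvalues, then so does $\operatorname{Der}(\lien)^F$.

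First I would try the naive averaging. Starting from a positive grading with grading derivation $D$, set $\bar D = \frac{1}{|F|} \sum_{f \in F} f D f^{-1}$. Each $f D f^{-1}$ is again a derivation, the sum is visibly fixed under conjugation by $F$, so $\bar D \in \operatorname{Der}(\lien)^F$, and $\bar D$ has rational entries since $D$ does. If one could show that $\bar D$ is still semisimple with positive eigenvalues, its eigenspace decomposition would be an $F$-invariant positive $\R$-grading, and the renaming of subspaces used in the proof of Theorem \ref{Grading} would convert it into a genuine positive grading preserved by $F$.

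The hard part, and the reason the statement is only conjectural, is a genuine tension between rationality and the spectral conditions. Averaging the rational derivation $D$ keeps $\bar D$ rational and $F$-invariant, but it can destroy both semisimplicity and positivity: the eigenvalues of an average of conjugates of $D$ are unrelated to the positive eigenvalues of $D$, and a nilpotent part may appear. One naturally tries to rescue positivity by fixing an $F$-invariant inner product on the real form $\lien_{\R}$ (obtained by averaging any inner product over $F$), so that $F$ acts orthogonally. If the grading derivation $D$ were self-adjoint for this product, then every $f D f^{-1}$ would be self-adjoint and positive-definite, hence so would $\bar D$, which would then be semisimple with positive real eigenvalues. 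But the eigenspaces $\lien_i$ of a rational positive grading need not be orthogonal for an $F$-invariant inner product, so $D$ cannot in general be taken self-adjoint; moreover a derivation symmetrized in this way has eigenspaces defined over $\R$ rather than over $\Q$, so even a successful symmetrization does not return a rational grading. The two requirements---rationality on one side, semisimplicity and positive eigenvalues on the other---pull against each other, and reconciling them is exactly the crux.

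A more structural route is to phrase the problem inside the algebraic group $\Aut(\lien) \subseteq \GL(n,\Q)$. A positive grading corresponds to a one-parameter subgroup $\lambda : \mathbb{G}_m \to \Aut(\lien)$ with strictly positive weights, such a $\lambda$ lies in some maximal torus, and an $F$-invariant positive grading corresponds to a positive-weight cocharacter valued in an $F$-stable maximal torus. For reductive groups the existence of an $F$-stable maximal torus is classical and would close the argument, so the genuine obstacle is that $\Aut(\lien)$ is highly non-reductive: the nilpotency of $\lien$ forces a large unipotent radical, and the maximal-torus machinery does not apply directly. I expect the decisive difficulty to be controlling how $F$ permutes the maximal tori of this non-reductive group while retaining positivity of the weights; a successful proof would most plausibly combine an $F$-equivariant Levi-type decomposition of $\Aut(\lien)$ with the norm construction $N(\cdot)$ of Section \ref{SGrad} to descend the resulting real, $F$-invariant expanding automorphism back to a rational one.
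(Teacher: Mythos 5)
You should first be clear about the status of what you were asked to prove: this statement is not a theorem of the paper but the conjecture closing Section \ref{algexp}, the algebraic reformulation (via Theorems \ref{expnil} and \ref{expinfra}) of the geometric conjecture that an infra-nilmanifold admits an expanding map whenever its covering nilmanifold does. The paper offers no proof, so there is no proof to compare yours against, and your proposal --- as you yourself acknowledge --- is not a proof either; leaving the statement open is exactly the correct outcome here. Within its scope, your analysis is sound. The dictionary between positive gradings and semisimple derivations with positive eigenvalues, the equivalence ``$\psi$ preserves the grading $\Leftrightarrow \psi D\psi^{-1}=D$'', and the reformulation that $\operatorname{Der}(\lien)^F$ must contain such a derivation are all correct. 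Your diagnosis of naive averaging is also right: $\bar D=\tfrac{1}{|F|}\sum_{f\in F}fDf^{-1}$ is a rational $F$-invariant derivation, but the conjugates $fDf^{-1}$ need not commute with one another, so nothing forces $\bar D$ to be semisimple or to have positive (or even real) eigenvalues.

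Two sharpenings if you pursue the algebraic-group route, since your formulation there is slightly off. First, an $F$-invariant grading does not correspond to a positive cocharacter ``valued in an $F$-stable maximal torus''; it corresponds to a positive cocharacter whose image is \emph{centralized} by $F$, i.e.\ a positive cocharacter of the fixed-point group $C_{\Aut(\lien)}(F)$, a strictly stronger condition (this is just condition 3.\ of Theorem \ref{expinfra} read through Theorem \ref{Grading}). However, this particular discrepancy is repairable by averaging in the right place: if $T$ is an $F$-stable maximal torus and $\lambda\in X_*(T)$ has all its weights on $\lien$ positive, then each $f\lambda f^{-1}$ is again a cocharacter of $T$ whose weight on a $T$-weight space $\lien_\chi$ equals $\langle f^{-1}\cdot\chi,\lambda\rangle>0$, and since $X_*(T)$ is abelian the sum $\mu=\sum_{f\in F}f\lambda f^{-1}$ is a cocharacter of $T$ centralized by $F$ with all weights positive; its weight decomposition is then an $F$-invariant positive grading. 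So, in sharp contrast with averaging derivations, averaging inside a fixed $F$-stable torus costs nothing. Second, non-reductiveness is less of an obstacle than you suggest: in characteristic $0$, Mostow's theorem places the finite group $F$ inside a Levi subgroup $L\le\Aut(\lien)$ defined over $\Q$, and maximal tori of $L$ are maximal in $\Aut(\lien)$, so the torus question lives in a reductive group. The genuine residual difficulties are producing an $F$-stable maximal torus \emph{defined over} $\Q$ and the fact that maximal tori are conjugate only over $\bar{\Q}$, so a positive cocharacter is a priori available in that torus only after extending scalars and must then be descended (e.g.\ by a further averaging over a Galois orbit in $X_*(T)$, which preserves positivity by the same weight computation). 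Those rationality questions, not the averaging step and not the unipotent radical, are where the conjecture actually lives; your proposal stops before engaging them, which is a genuine gap --- but one the paper shares, since it leaves the statement open as well.
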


\section{Co-Hopfian groups}
\label{alghop}

An expanding map of an infra-nilmanifold $\Gamma \backslash G$ is an example of a non-trivial self-cover, i.e.\ a covering map $\Gamma \backslash G \to \Gamma \backslash G$ which is not a homeomorphism. So Theorem \ref{expnil} and Theorem \ref{expinfra} give an algebraic way of constructing non-trivial self-covers on infra-nilmanifolds. The natural question we answer in this section is if there is an algebraic way of describing all infra-nilmanifolds which allow a non-trivial self-cover. The following definition is then natural in this discussion:

\begin{Def}
We call a group $H$ co-Hopfian if $H$ contains no non-trivial subgroup isomorphic to itself. Equivalently, $H$ is co-Hopfian if every injective group morphism $\varphi: H \to H$ is automatically surjective.
\end{Def}
\noindent So infra-nilmanifolds with only trivial self-covers correspond to almost-Bieberbach groups which are co-Hopfian. Because of Proposition \ref{extension}, an $\mathcal{F}$-group is co-Hopfian if and only if every injective group morphism has determinant $\pm 1$. 

\smallskip

Assume now that $N$ is an $\mathcal{F}$-group that is not co-Hopfian. This means there exists an injective group morphism $\varphi: N \to N$ which is not surjective, so with $\vert \det(\varphi) \vert > 1$. From Section \ref{endos} we know that $\varphi$ has characteristic polynomial $p(X) \in \Z[X]$. If we decompose $p(X)$ into its $\Q$-irreducible components, we get $p(X) = p_1(X) \ldots p_l(X)$ with $p_i(X) \in \Z[X]$ and at least one $p_i(X)$ with $\vert p_i(0) \vert > 1$. This means that $N(\varphi)$ is an automorphism with only eigenvalues in $\Z$ and at least one eigenvalue $\lambda$ with $\vert \lambda \vert > 1$. So the translation of Theorem \ref{Grading} from expanding maps to non-trivial self-covers is the following:

\begin{Thm}
Let $\lien$ be a rational Lie algebra with an automorphism $\varphi$ with characteristic polynomial in $\Z[X]$ and $\vert \det (\varphi) \vert > 1$, then $\lien$ has a non-negative and non-trivial grading. Moreover, if $\psi$ commutes with $\varphi$, then $\psi$ preserves the grading. 
\end{Thm}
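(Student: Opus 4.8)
The plan is to mimic the structure of the proof of Theorem \ref{Grading}, replacing the norm-map construction $N(\varphi)$ by the same device and checking that the weaker hypothesis still yields the desired non-negative grading. First I would invoke the fact, recalled just above, that since $\Aut(\lien)$ is an algebraic group we may assume $\varphi$ is diagonalizable without changing the essential features, and that the characteristic polynomial lies in $\Z[X]$. Applying the norm construction, $N(\varphi) \in \Aut(\lien)$ is an automorphism with only rational eigenvalues; but because $\varphi$ has characteristic polynomial in $\Z[X]$, each eigenvalue of $\varphi$ is an algebraic integer, so each $N_E(\lambda) = \prod_i \sigma_i(\lambda)$ is an ordinary integer. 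Hence $N(\varphi)$ has only integer eigenvalues, and since $\vert \det(\varphi)\vert = \vert\det(N(\varphi))\vert > 1$, at least one such integer eigenvalue $\lambda$ satisfies $\vert\lambda\vert > 1$. This is the observation already made in the text preceding the statement.

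The key point, and the place where the argument differs from the expanding case, is that the eigenvalues of $\varphi$ are no longer assumed to have absolute value $>1$, so the eigenvalues of $N(\varphi)$ need not all be large. To build a grading I would first replace $N(\varphi)$ by $N(\varphi)^2$ so that all its (integer) eigenvalues are positive; this does not change eigenspaces and preserves commutation with any $\psi$ commuting with $\varphi$. Then, writing the distinct positive integer eigenvalues as $e^{r_1}, \ldots, e^{r_s}$ with $r_i \in \R$, I would set $\lien = \bigoplus_{r \in \R} V_r$ where $V_r$ is the eigenspace for eigenvalue $e^r$. As in Theorem \ref{Grading}, the multiplicativity of eigenvalues under the bracket (shown in the discussion of $N(\varphi)$) makes this an $\R$-grading, which by \cite[Proposition 2.6.]{dip03-1} renames to an honest $\Z$-grading.

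The main obstacle is arguing that this grading is \emph{non-negative and non-trivial} rather than positive. Non-triviality follows because $N(\varphi)$ has an eigenvalue $e^r$ with $\vert e^r\vert > 1$, hence after squaring an eigenvalue $>1$, so some $V_r$ with $r>0$ is nonzero and the grading is not concentrated in degree $0$. Non-negativity, however, requires that every eigenvalue of $N(\varphi)^2$ be $\geq 1$; equivalently every positive integer eigenvalue of $N(\varphi)^2$ is $\geq 1$, which is automatic since integer eigenvalues are positive integers and hence at least $1$. Thus $V_r = 0$ for all $r < 0$, giving $\lien_i = 0$ for $i < 0$ after the renaming, and the grading is non-negative and non-trivial as claimed. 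Finally, if $\psi$ commutes with $\varphi$ it commutes with $N(\varphi)$ and therefore preserves each eigenspace $V_r$, so $\psi$ preserves the grading, which completes the proof exactly as in Theorem \ref{Grading}.
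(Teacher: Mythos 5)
Your proposal is correct and takes essentially the same approach as the paper, whose proof is precisely the discussion preceding the statement: integrality of the characteristic polynomial makes every eigenvalue of $N(\varphi)$ an integer, at least one of absolute value $>1$, and then the eigenspace construction of Theorem \ref{Grading} gives a non-negative, non-trivial grading because nonzero integer eigenvalues, after squaring, are all $\geq 1$. The only slip is your claim $\vert\det(\varphi)\vert = \vert\det N(\varphi)\vert$: in fact $\det N(\varphi) = \det(\varphi)^{[E:\Q]}$ (the paper avoids this issue by instead extracting an irreducible factor $p_i$ of the characteristic polynomial with $\vert p_i(0)\vert>1$), but since $\vert\det(\varphi)\vert>1$ one still gets $\vert\det N(\varphi)\vert>1$, which is all your argument needs.
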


\noindent Recall that a grading $\lien = \displaystyle \bigoplus_{i \in \Z} \lien_i$ is called non-negative if $\lien_i = 0$ for all $i < 0$ and non-trivial if $\lien_0 \neq \lien$. Just as in Corollary \ref{fp}, we have the following consequence of Proposition \ref{constructgroup}:

\begin{Cor}
Let $\lien$ be a rational Lie algebra with non-negative and non-trivial grading. Then there exists a full subgroup $N$ and a $k >0$ such that for every prime $p$, there exists an injective group morphism $N \to N$ with determinant $p^k$. Moreover, this group morphism commutes with every automorphism that preserves the grading.
\end{Cor}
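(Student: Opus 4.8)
The plan is to follow the proof of Corollary \ref{fp} almost verbatim; the only genuine differences come from the weaker hypothesis (non-negative rather than positive) and the additional non-triviality assumption. First I would fix a non-negative and non-trivial grading $\lien = \bigoplus_{i \geq 0} \lien_i$ and choose a basis of $\lien$ with every basis vector contained in a single subspace $\lien_i$. For each prime $p$ I would then define the linear map $\varphi_p: \lien \to \lien$ by $\varphi_p(x) = p^i x$ for $x \in \lien_i$. Exactly as in Corollary \ref{fp}, the relation $[\lien_i,\lien_j] \subseteq \lien_{i+j}$ forces $\varphi_p$ to be a Lie algebra automorphism, and since every index $i$ occurring is non-negative the scalars $p^i$ are genuine integers, so $\varphi_p$ has integer entries in the chosen basis.

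Next I would compute $\det(\varphi_p) = p^k$ with $k = \sum_{i} i \dim \lien_i$, and here the non-triviality hypothesis enters: because $\lien_0 \neq \lien$, there is some $i > 0$ with $\lien_i \neq 0$, whence $k > 0$. This is the one place the argument differs in substance from Corollary \ref{fp}; in the positive case $k > 0$ was automatic, whereas now it is precisely non-triviality that prevents $\varphi_p$ from collapsing to the identity (which would give determinant $1$). I would stress that $\varphi_p$ need not be expanding, since it acts as the identity on $\lien_0$; this is exactly why the present statement drops the expandingness conclusion of Corollary \ref{fp} and keeps only injectivity and the value of the determinant.

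Finally, since $\varphi_p$ acts as the scalar $p^i$ on each $\lien_i$, any automorphism $\psi$ preserving the grading satisfies $\psi(\lien_i) = \lien_i$ and therefore commutes with $\varphi_p$. To pass from the Lie algebra to the group I would invoke Proposition \ref{constructgroup}, which furnishes a full subgroup $N$ of $N^\Q$ on which every integer-entry automorphism restricts to an injective group morphism; applying this to $\varphi_p$ and recalling that its determinant as a group morphism equals $\det(\varphi_p) = p^k$ by Proposition \ref{extension} then yields the corollary. I do not expect any serious obstacle in this proof: the whole argument is a mild adaptation of Corollary \ref{fp}, and the only point requiring care is the bookkeeping of the non-triviality condition to guarantee $k > 0$.
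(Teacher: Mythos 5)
Your proposal is correct and is exactly the argument the paper intends: the paper gives no separate proof, stating only that the corollary follows ``just as in Corollary \ref{fp}'' from Proposition \ref{constructgroup}, and your adaptation (integer entries from non-negativity, $k>0$ from non-triviality, commutation from the diagonal action on the graded pieces) is precisely that adaptation. No gaps.
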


We conclude the following theorem, which is the analog of Theorem \ref{expinfra} for non-trivial self-covers. The proof is identical as before:

\begin{Thm}
\label{covinfra}
Let $\Gamma$ be an almost-Bieberbach group with rational holonomy representation $F \le \Aut(\lien)$. Then the following are equivalent:
\begin{enumerate}
\item $\Gamma$ is not co-Hopfian;
\item $\lien$ has a non-trivial and non-negative grading which is preserved by every element of $F$;
\item there exists an automorphism $\varphi \in \Aut(n^\Q)$, commuting with every element of $F$, with characteristic polynomial in $\Z[X]$ and $\vert \det (\varphi) \vert > 1$;
\end{enumerate} 
\end{Thm}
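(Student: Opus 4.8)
The plan is to follow the proof of Theorem \ref{expinfra} line for line, systematically replacing the expanding hypothesis by the weaker condition ``characteristic polynomial in $\Z[X]$ with $\vert\det\vert>1$'' and replacing positive gradings by non-negative, non-trivial ones. The implication $3.\Rightarrow 2.$ is then immediate from the non-negative analogue of Theorem \ref{Grading} stated just before the present theorem: passing from $\varphi$ to $N(\varphi)$ produces a non-trivial, non-negative grading whose graded pieces are the eigenspaces of $N(\varphi)$, and any automorphism commuting with $\varphi$ (in particular every element of $F$, by hypothesis) commutes with $N(\varphi)$ and hence preserves those eigenspaces.

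For $1.\Rightarrow 3.$ the only structural change is that Gromov's theorem is no longer needed and no longer available. Instead I would use the direct correspondence: if $\Gamma$ is not co-Hopfian there is an injective, non-surjective morphism $\varphi\colon\Gamma\to\Gamma$, whose image has finite index by the remark following Theorem \ref{Bieberbach}. That same theorem then realises $\varphi$ as conjugation by an affine transformation $\alpha=(g,\delta)\in N^\Q\rtimes\Aut(\lien)$. The linear part $\delta$ is precisely the extension $\varphi^\Q$ on $\lien$, so by the discussion of Section \ref{endos} its characteristic polynomial lies in $\Z[X]$, and by Proposition \ref{extension} one has $\vert\det\delta\vert=[N:\varphi(N)]>1$ because $\varphi$ fails to be surjective. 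Exactly as in Theorem \ref{expinfra} one checks $\delta F\delta^{-1}=F$, so replacing $\alpha$ by a suitable power $\alpha^k$ makes $\delta^k$ commute with every element of $F$; this preserves integrality of the characteristic polynomial and keeps $\vert\det\delta^k\vert=\vert\det\delta\vert^k>1$. The automorphism $\delta^k$ is then the one required in $3.$

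For $2.\Rightarrow 1.$ the argument is essentially verbatim the corresponding part of Theorem \ref{expinfra}, using the non-negative analogue of Corollary \ref{fp} in place of Corollary \ref{fp} itself. This provides a full subgroup $N_0$ of $N^\Q$ and, for each prime $p$, an injective morphism $\varphi_p\colon N_0\to N_0$ with $\det\varphi_p=p^k$ commuting with every element of $F$. I would then write $F=\{f_1,\dots,f_l\}$, fix elements $(n_j,f_j)\in\Gamma$, form the full subgroup $N_1$ generated by $N$ and the $n_j$ together with a normal finite-index $N_2\subseteq N$ inside $N_1$, and choose $p$ avoiding all primes produced by Theorem \ref{primes} for the pairs $(N_0,N_1)$, $(N_0,N_2)$, $(N_0,N)$ and satisfying $p\nmid[N_1:N_2]$. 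Passing to a power, $\varphi:=\varphi_p$ then fixes $N_1$, $N_2$ and $N$ and, using Lemma \ref{index} exactly as before, induces the identity on $\faktor{N_1}{N_2}$; the same computation of $(1,\varphi)\gamma(1,\varphi^{-1})$ shows $\varphi\Gamma\varphi^{-1}\subseteq\Gamma$. Since the index equals $\vert\det\varphi\vert=p^k>1$, the induced endomorphism of $\Gamma$ is injective but not surjective, so $\Gamma$ is not co-Hopfian.

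The main obstacle, such as it is, lies in the $1.\Rightarrow 3.$ step, since that is where the proof genuinely departs from the expanding case: I must ensure that abstract non-surjectivity of $\varphi$ on $\Gamma$ transfers cleanly to the strict inequality $\vert\det\delta\vert>1$ on the affine realisation, and that the reduction to a map commuting with $F$ does not destroy either this inequality or the integrality of the characteristic polynomial. Both points follow from Proposition \ref{extension} and the elementary fact that determinants and characteristic polynomials behave well under powers, so I expect no serious difficulty; the only technically involved portion, the $2.\Rightarrow 1.$ prime-selection argument, is identical to the one already carried out for Theorem \ref{expinfra}.
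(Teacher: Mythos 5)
Your overall strategy is exactly the paper's: the paper proves Theorem \ref{covinfra} by declaring the proof ``identical'' to that of Theorem \ref{expinfra}, using the non-negative analogues of Theorem \ref{Grading} and Corollary \ref{fp} stated just before it, and your $3.\Rightarrow 2.$ and $2.\Rightarrow 1.$ arguments are faithful and correct instantiations of that template.

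However, your $1.\Rightarrow 3.$ contains one claim that is false as stated, and it sits precisely at the point where this proof genuinely departs from the expanding case. You assert that the linear part $\delta$ of $\alpha=(g,\delta)$ ``is precisely the extension $\varphi^\Q$''. It is not: conjugation by $\alpha$ sends $(x,1)$ to $(g\delta(x)g^{-1},1)$, so the automorphism of $N^\Q$ induced by $\varphi$ is $\mu_g\circ\delta$, i.e.\ $\mathrm{Ad}(g)\circ\delta$ on $\lien$, and not $\delta$ itself. Consequently the discussion of Section \ref{endos} and Proposition \ref{extension} apply directly to $\mathrm{Ad}(g)\circ\delta$, giving characteristic polynomial in $\Z[X]$ and $\vert\det\vert=[N:\varphi(N)]=[\Gamma:\varphi(\Gamma)]>1$ (the middle equality itself needs the short computation $[\Gamma:\varphi(N)]=[\Gamma:N]\,[N:\varphi(N)]=[\Gamma:\varphi(\Gamma)]\,[\varphi(\Gamma):\varphi(N)]$ together with $[\varphi(\Gamma):\varphi(N)]=[\Gamma:N]$), but they say nothing directly about $\delta$. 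In the expanding case this discrepancy is invisible, because Gromov's theorem hands you an affine endomorphism whose \emph{linear part} is expanding by definition; here the properties of $\delta$ must be deduced, and your stated justification (Proposition \ref{extension} plus behaviour under powers) does not do it. The gap is repairable by one observation: $\mathrm{Ad}(g)$ is unipotent and induces the identity on each quotient $\gamma_i(\lien)/\gamma_{i+1}(\lien)$ of the lower central series, and both $\delta$ and $\mathrm{Ad}(g)\circ\delta$ preserve this filtration; hence they induce the same maps on the associated graded pieces and therefore have the same characteristic polynomial and determinant. With that inserted, $\delta$, and then any power $\delta^k$ commuting with every element of $F$, has the required properties, and the rest of your argument goes through as written.
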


One of the consequences of this algebraic characterization is that an almost-Bieberbach group is co-Hopfian if its Fitting subgroup is co-Hopfian. We conclude this section by giving an elementary proof of this fact. 

If $H$ is any group, we call a subgroup $K \subseteq H$ injectively characteristic if $\varphi(K) \subseteq K$ for all injective group morphisms $\varphi: H \to H$. Every injectively characteristic subgroup is a normal subgroup (since every inner automorphism is injective). The notion of an injectively characteristic subgroup is weaker than a fully characteristic subgroup and stronger than a characteristic subgroup. It's an exercise to check that the Fitting subgroup of an almost-Bieberbach group is an injectively characteristic subgroup which is in general not fully characteristic.

The following proposition shows why injectively characteristic subgroups are important, especially when studying co-Hopfian groups:
\begin{Prop}
Let $H$ be a group and $K \subseteq H$ an injectively characteristic subgroup. If both $K$ and $\faktor{H}{K}$ are co-Hopfian then also $H$ is co-Hopfian.
\end{Prop}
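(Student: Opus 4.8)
The plan is to show directly that every injective endomorphism $\varphi \colon H \to H$ is surjective, using a short five-lemma style argument across the exact sequence $1 \to K \to H \to \faktor{H}{K} \to 1$. Since $K$ is injectively characteristic, we have $\varphi(K) \subseteq K$, so $\varphi$ restricts to an injective group morphism $\varphi|_K \colon K \to K$; because $K$ is co-Hopfian this restriction is surjective, giving the key equality $\varphi(K) = K$.

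Next I would pass to the quotient. As $\varphi(K) \subseteq K$, the assignment $hK \mapsto \varphi(h)K$ descends to a well-defined group morphism $\bar{\varphi} \colon \faktor{H}{K} \to \faktor{H}{K}$. I would then verify that $\bar{\varphi}$ is injective: if $\varphi(h) \in K$, then using $\varphi(K) = K$ there is some $k \in K$ with $\varphi(k) = \varphi(h)$, and injectivity of $\varphi$ forces $h = k \in K$. Since $\faktor{H}{K}$ is co-Hopfian, $\bar{\varphi}$ is therefore surjective.

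Finally I would combine the two surjectivity statements. Given any $h \in H$, surjectivity of $\bar{\varphi}$ yields $x \in H$ with $\varphi(x)K = hK$, so $h = \varphi(x)k$ for some $k \in K$; surjectivity of $\varphi|_K$ then provides $k' \in K$ with $\varphi(k') = k$, whence $h = \varphi(x)\varphi(k') = \varphi(xk')$. Thus $h \in \varphi(H)$, so $\varphi$ is surjective and $H$ is co-Hopfian.

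The argument is essentially a routine diagram chase and presents no serious obstacle; the only point demanding care is that co-Hopfianness of $K$ is used to upgrade the containment $\varphi(K) \subseteq K$ to the equality $\varphi(K) = K$, and this equality is exactly what makes the induced map $\bar{\varphi}$ on the quotient \emph{injective} rather than merely well-defined. Without it one would obtain only well-definedness, and the final combination step would break down.
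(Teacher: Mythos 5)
Your proof is correct and follows essentially the same route as the paper: restrict $\varphi$ to $K$, use co-Hopfianness of $K$ to get $\varphi(K)=K$, prove injectivity of the induced map $\bar{\varphi}$ on $\faktor{H}{K}$ by exactly the same lifting argument, and use co-Hopfianness of the quotient to conclude. The only cosmetic difference is that the paper invokes the 5-lemma for the final surjectivity step, where you carry out the short diagram chase explicitly.
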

\begin{proof}
Let $\varphi: H \to H$ be injective group morphism of $H$. Since $K$ is injectively characteristic, we know that $\varphi(K) \subseteq K$ and thus $\varphi$ induces a injective morphism on $K$, which we call $\varphi_K$. Because $K$ is co-Hopfian, we know that $\varphi_K$ is an automorphism. Note that $\varphi$ also induces a morphism on the group $\faktor{H}{K}$ and call this induced map $\bar{\varphi}$. Thus we have the following commutative diagram:
$$
\xymatrix{
1 \ar[r] & K \ar[r] \ar[d]^{\varphi_K} & H\ar[r] \ar[d]^{\varphi} & \faktor{H}{K} \ar[r] \ar[d]^{\bar{\varphi}} & 1 \\
1 \ar[r] & K \ar[r]        	           & H\ar[r]                  & \faktor{H}{K} \ar[r]                  & 1 .
}$$
As a consequence of the 5-lemma, it suffices to show that $\bar{\varphi}$ is an automorphism. Because $\faktor{H}{K}$ is co-Hopfian, it is sufficient to show that $\bar{\varphi}$ is injective. 
\smallskip
Assume that $\bar{\varphi}(hK) = K $ for some $h \in H$. This is equivalent to saying that $\varphi(h) \in K$. Since $\varphi_K$ is surjective, there exists some $k \in K$ with $\varphi(h) = \varphi_K(k) = \varphi(k)$. From the injectivity of $\varphi$ we have that $k= h$ and thus $hK = K$. This shows that $\bar{\varphi}$ is injective. We conclude that $\varphi$ is an automorphism and thus $H$ is co-Hopfian.
\end{proof}

By applying this proposition to the Fitting subgroup of an almost-Bieberbach group (and because finite groups are always co-Hopfian), we get the following:
\begin{Cor}
Let $\Gamma$ be an almost-Bieberbach group with Fitting subgroup $N$. If $N$ is co-Hopfian, then also $\Gamma$ is co-Hopfian.
\end{Cor}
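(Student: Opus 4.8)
The plan is to deduce the corollary as an immediate application of the preceding proposition, taken with $H = \Gamma$ and $K = N$. Three hypotheses then need checking: that $N$ is injectively characteristic in $\Gamma$, that $N$ is co-Hopfian, and that $\faktor{\Gamma}{N}$ is co-Hopfian. The second is exactly the assumption of the corollary. The third is free, since $\faktor{\Gamma}{N} \cong F$ is finite and any injective self-map of a finite set is automatically surjective, so every finite group is co-Hopfian. Once all three are in place, the proposition delivers that $\Gamma$ is co-Hopfian, and nothing further is required.

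The only point needing genuine argument is that $N$ is injectively characteristic, i.e.\ that $\varphi(N) \subseteq N$ for every injective group morphism $\varphi \colon \Gamma \to \Gamma$; this is the step left as an exercise above. I would argue via the Second Bieberbach Theorem. By the remark following Theorem \ref{Bieberbach}, that theorem applies to injective morphisms $\Gamma \to \Gamma$, so there is an affine transformation $\alpha = (g,\delta) \in N^\Q \rtimes \Aut(\lien)$ with $\varphi(\gamma) = \alpha \gamma \alpha^{-1}$ for all $\gamma \in \Gamma$. Writing $p$ for the holonomy projection onto $\Aut(\lien)$, which is a group homomorphism, we obtain $p(\varphi(\gamma)) = p(\alpha)\, p(\gamma)\, p(\alpha)^{-1} = \delta\, p(\gamma)\, \delta^{-1}$. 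Under the identification $\Gamma \subseteq N^\Q \rtimes F$ the Fitting subgroup is $N = \Gamma \cap \ker p$, the elements with trivial holonomy part, so any $\gamma \in N$ has $p(\gamma) = 1$, whence $p(\varphi(\gamma)) = 1$ and therefore $\varphi(\gamma) \in \Gamma \cap \ker p = N$. This proves $\varphi(N) \subseteq N$, as needed.

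I expect this conjugation argument to be the crux of the whole statement. One is tempted to give a purely internal proof using only that $N$ is the maximal normal nilpotent subgroup: for injective $\varphi$ the image $\varphi(N) \cong N$ is again nilpotent and is in fact the Fitting subgroup of the finite-index subgroup $\varphi(\Gamma)$. The obstacle along that route is that $\varphi(N)$ need only be normal in $\varphi(\Gamma)$, not in all of $\Gamma$, so one cannot invoke maximality of $N$ inside $\Gamma$ directly. Passing through Theorem \ref{Bieberbach} avoids this difficulty completely, because it exhibits $\varphi$ as an honest affine conjugation and makes the effect on the holonomy part transparent. With $N$ shown to be injectively characteristic, the corollary follows formally from the proposition.
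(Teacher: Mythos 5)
Your proposal is correct and follows exactly the paper's route: the paper proves the corollary by applying the preceding proposition with $H = \Gamma$ and $K = N$, noting that the quotient $\faktor{\Gamma}{N} \cong F$ is finite and hence co-Hopfian. The only difference is that you also supply a proof of the fact the paper explicitly leaves as an exercise --- that the Fitting subgroup is injectively characteristic --- and your argument for it (apply Theorem~\ref{Bieberbach} to an injective morphism $\varphi\colon \Gamma \to \Gamma$, then use that conjugation by $\alpha = (g,\delta)$ acts on holonomy parts by $p(\gamma) \mapsto \delta\, p(\gamma)\, \delta^{-1}$, so the kernel condition $N = \Gamma \cap \ker p$ is preserved) is correct.
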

\noindent Just as for expanding maps, the only examples of infra-nilmanifolds without non-trivial self-covers are constructed from nilmanifolds without self-covers. Again we conjecture that this is the only possibility:
\begin{con}
An infra-nilmanifold $\Gamma \backslash G$ admits a non-trivial self-cover if and only the nilmanifold $N \backslash G$ admits an expanding map, with $N$ the Fitting subgroup of $\Gamma$. Equivalently, an almost-Bieberbach group $\Gamma$ is co-Hopfian if and only if its Fitting subgroup $N$ is co-Hopfian.
\end{con}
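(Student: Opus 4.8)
The plan is to reduce the topological statement to a purely algebraic question about gradings and then attack that. First I would record that one of the two implications is already in hand: the contrapositive of the ``only if'' direction is exactly the preceding corollary, which shows that if the Fitting subgroup $N$ is co-Hopfian then so is $\Gamma$. Thus the real content is the reverse implication, namely that if $N$ is \emph{not} co-Hopfian then $\Gamma$ is \emph{not} co-Hopfian. Applying Theorem \ref{covinfra} (with trivial holonomy for the nilmanifold and with the full holonomy group $F$ for the infra-nilmanifold) translates this into the following statement about the rational Lie algebra $\lien$: if $\lien$ admits a non-trivial non-negative grading, then it admits a non-trivial non-negative grading that is preserved by every element of $F$. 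This is the exact analogue, for non-negative gradings, of the conjecture stated after Theorem \ref{expinfra} for positive gradings, and the remaining argument would take place entirely inside $\Aut(\lien)$.

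The natural approach to producing the $F$-invariant grading is averaging. A non-trivial non-negative grading of $\lien$ is the same datum as a non-zero semisimple derivation $D \in \operatorname{Der}(\lien)$ whose eigenvalues are non-negative (after the renaming of indices from the $\R$-grading remark, we may take them in $\Z_{\geq 0}$), the grading being recovered as the eigenspace decomposition and ``$F$ preserves the grading'' being equivalent to ``$D$ commutes with every element of $F$''. Starting from such a $D$ I would form $\bar D = \sum_{f \in F} f D f^{-1}$. This $\bar D$ is again a derivation, it is manifestly $F$-invariant since conjugating by $g \in F$ merely permutes the summands, and it is non-zero because $\trace(\bar D) = |F|\,\trace(D) > 0$. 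If $\bar D$ were semisimple with non-negative real spectrum, its eigenspaces would furnish precisely the desired $F$-invariant non-trivial non-negative grading, and the proof would be complete.

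The hard part is exactly the semisimplicity and the sign of the spectrum of $\bar D$. The derivations $f D f^{-1}$ need not commute with one another, so there is no reason for their sum to be semisimple; and although one may pass to the semisimple part $(\bar D)_s$ of its Jordan decomposition, which is still a derivation and still $F$-invariant, its eigenvalues coincide with those of $\bar D$ and these may well be complex or negative once the summands fail to commute. Thus naive averaging does not by itself deliver a non-negative grading. To repair this I would instead exploit the structure of the algebraic group $\Aut(\lien)$: since $F$ is finite and we are in characteristic zero, $F$ is linearly reductive, and one can seek an $F$-stable maximal torus and try to locate a non-negative non-trivial grading inside the centralizer of $F$. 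The genuine obstacle is that $F$ may permute the eigenspaces of every available grading in a way that forces negative degrees to appear after symmetrization, and controlling this for an arbitrary nilpotent $\lien$ is precisely what keeps the statement a conjecture; I would expect real progress only by first establishing the torus/centralizer argument in general, or by reducing to the homogeneous-type case, where the corresponding positive-grading statement is already known via the analogue of \cite[Theorem 5.1.]{dl03-1}.
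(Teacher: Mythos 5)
You have not proved the statement, and in fact no complete proof exists in the paper: this is explicitly labelled a \emph{conjecture} there, and remains open. What the paper does provide is exactly the two ingredients you reproduce. First, the easy direction: if the Fitting subgroup $N$ is co-Hopfian then so is $\Gamma$, which follows from the proposition on injectively characteristic subgroups and the corollary after it (your ``already in hand'' step is correct). Second, the reduction of the hard direction, via Theorem \ref{covinfra}, to the algebraic question: does the existence of a non-trivial non-negative grading of $\lien$ imply the existence of one preserved by every element of the finite group $F \le \Aut(\lien)$? The paper states this reformulation verbatim at the end of Section \ref{alghop} and leaves it as the open problem. Your proposal performs the same translation and then stalls at the same point, so what you have written is an accurate account of why the statement is a conjecture, not a proof of it.

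The gap in your attempted closing argument is the one you yourself diagnose, and it is fatal as it stands. The correspondence between non-negative gradings and semisimple derivations with non-negative integer spectrum is fine, and $\bar D = \sum_{f \in F} f D f^{-1}$ is indeed a non-zero $F$-invariant derivation (the trace computation $\trace(\bar D) = \vert F \vert \trace(D) > 0$ is correct since non-triviality of the grading forces $\trace(D)>0$). But because the conjugates $f D f^{-1}$ need not commute, $\bar D$ need not be semisimple, and positivity of the trace gives no control whatsoever on the spectrum: the eigenvalues of $\bar D$ (equivalently of its semisimple part, which is a polynomial in $\bar D$ and hence still $F$-invariant) can be complex or negative, and even when real and non-negative they need not be rational, so no grading of the \emph{rational} Lie algebra $\lien$ is recovered. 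The subsequent ``torus/centralizer'' plan is only a strategy sketch with no argument that an $F$-stable maximal torus of $\Aut(\lien)$ must contain a cocharacter inducing a non-trivial non-negative grading; constructing such a cocharacter for arbitrary nilpotent $\lien$ is precisely the unsolved content of the conjecture (the paper notes it is known only in special situations, such as Lie algebras of homogeneous type for the expanding-map analogue, via \cite[Theorem 5.1.]{dl03-1}).
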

\noindent This conjecture can be translated to the following question about automorphisms of a rational Lie algebra: if there exists an automorphism $\varphi \in \Aut(\lien)$ with characteristic polynomial in $\Z[X]$ and $\vert \det(\varphi) \vert > 1$, does there also exist an automorphism with the same properties and which commutes with every element of a finite group of automorphisms? Equivalently, starting from a Lie algebra with a non-trivial and non-negative grading, does there exist a non-trivial and non-negative grading which is preserved by every element of a finite group of automorphisms. Because of the relation between this conjecture and the one for expanding maps, we believe that a proof for one of them can easily be adapted for the other case.

\section{Applications}
\label{Exam}

In this section we highlight the most important applications of our main results. First we give some explicit examples of Lie algebras corresponding to co-Hopfian $\mathcal{F}$-groups and show that they are of minimal dimension. Next we give a general way of constructing new examples starting from a $\Q$-group by using a result of \cite{bg86-1}.

\smallskip

Recall that a nilpotent Lie algebra $\lien$ is called characteristically nilpotent if every derivation of $\lien$ is a nilpotent endomorphism, meaning that all of its eigenvalues are equal to $0$. The automorphism group $\Aut(\lien)$ is an algebraic group with corresponding Lie algebra given by the derivations of $\lien$. So if $\lien$ is characteristically nilpotent, the connected component of the identity in $\Aut(\lien)$ only contains unipotent automorphisms and therefore every automorphism has determinant $1$. We conclude that if the corresponding Lie algebra of an infra-nilmanifold is characteristically nilpotent, it can never have a non-trivial self-cover. 

In \cite[Example 2.5.]{bele03-1}, the author gives an example of a $7$-dimensional nilmanifold without self-covers (starting from a characteristically nilpotent Lie algebra) and asks if this is an example of minimal dimension. The following theorem gives a positive answer: 
\begin{Thm}
\label{dimension}
All nilmanifolds of dimension $\leq 6$ admit an expanding map.
\end{Thm}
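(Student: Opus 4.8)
The plan is to reduce the topological statement to one about rational Lie algebras and then settle it with the low-dimensional classification. By Theorem \ref{expnil}, a nilmanifold $N\backslash G$ with associated rational Lie algebra $\lien$ admits an expanding map if and only if $\lien$ carries a positive grading $\lien=\bigoplus_{i>0}\lien_i$. It therefore suffices to prove the purely algebraic statement that \emph{every rational nilpotent Lie algebra of dimension at most $6$ admits a positive grading}; recall that such a grading automatically yields an expanding automorphism (send $x\in\lien_i$ to $q^i x$ for a rational $q>1$), matching item $3$ of Theorem \ref{expnil}.

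To prove the algebraic statement I would run through the classification of nilpotent Lie algebras of dimension at most $6$, each member of which is presented by explicit structure constants on a fixed basis $e_1,\dots,e_n$. For a fixed algebra the task is to choose positive integer weights $w(e_k)$ so that $w(e_k)=w(e_i)+w(e_j)$ whenever the structure constant $c_{ij}^k$ is nonzero; collecting the basis vectors of a common weight then produces the positive grading. The elementary cases are immediate---an abelian algebra is graded by putting every generator in degree $1$, and the Heisenberg algebra $\langle x,y,z\mid[x,y]=z\rangle$ is graded by $w(x)=w(y)=1$, $w(z)=2$---and more generally every stratified (Carnot) algebra inherits a positive grading from its lower central series, which already disposes of most of the list. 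Because the weights are integers and the chosen basis is rational, the resulting grading is defined over $\Q$, so no descent from the complexification is needed and the subtle rational-form question raised after Theorem \ref{expnil} is avoided here.

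The main obstacle is the remaining dimension-$6$ algebras that are \emph{not} stratified: for these the lower central series need not split off a strictly positive grading, and one must instead solve the weight equations $w(e_k)=w(e_i)+w(e_j)$ directly (by hand or with computer algebra) to exhibit a compatible positive weight system. The reason such a system always exists in this range, and the reason the bound is exactly $6$, is that the true obstruction is not merely the absence of a non-nilpotent derivation but the absence of a semisimple derivation with \emph{all} eigenvalues positive; this only becomes forced for characteristically nilpotent Lie algebras, which do not occur below dimension $7$ (the source of the $7$-dimensional example of \cite{bele03-1}). Verifying that each of the finitely many dimension-$\leq 6$ types clears this bar is the technical heart of the argument, and once it is done Theorem \ref{expnil} finishes the proof.
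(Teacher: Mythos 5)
Your reduction is the same as the paper's: by Theorem \ref{expnil} it suffices to show that every rational nilpotent Lie algebra of dimension at most $6$ has a positive grading, and both you and the paper then work through the classification. The genuine gap is in the step where you explain \emph{why} the case-check must succeed. You claim that the absence of a positive grading ``only becomes forced for characteristically nilpotent Lie algebras,'' and that since these first occur in dimension $7$, every algebra of dimension $\leq 6$ passes. That implication is false: failing to be characteristically nilpotent does not imply the existence of a positive grading. The paper's own Example \ref{notcohopf} is a $7$-dimensional algebra carrying a diagonalizable non-nilpotent derivation (eigenvalues $0,1,1,1,2,2,3$, exponentiating to the automorphism $\varphi$ of determinant $2^{10}$), so it is far from characteristically nilpotent, yet it admits no expanding automorphism and hence no positive grading (a positive grading would immediately yield an expanding automorphism by scaling degree-$i$ vectors by $q^i$). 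So ``no characteristically nilpotent algebras below dimension $7$'' cannot substitute for the verification; knowing that in dimension $\leq 6$ characteristic nilpotency is the \emph{only} obstruction is essentially equivalent to the theorem you are proving, so your argument is circular precisely where the work lies.

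A second, smaller problem: the assertion that ``every stratified (Carnot) algebra inherits a positive grading from its lower central series, which already disposes of most of the list'' is not an argument either. Being stratified \emph{means} admitting a positive grading generated in degree one; a general nilpotent Lie algebra is not isomorphic to the associated graded algebra of its lower central series, so stratifiability is exactly what must be checked algebra by algebra. The paper instead uses reductions that are actually valid: for nilpotency class $\leq 2$ the decomposition $\lien = \lien_1 \oplus [\lien,\lien]$, with $\lien_1$ any vector-space complement, is always a positive grading; for the higher classes it invokes \cite{dip03-1}, which covers all $2$-generated $4$-step and $3$-generated $3$-step algebras; this leaves only a handful of algebras from the classification \cite{degr07-1} to grade by hand. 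Your weight-equation method is a fine way to perform those remaining hand checks (with the caveat that it is basis-dependent, so a failure on the classified basis would not by itself rule out a grading), but as written the verification is deferred, and the reason you offer for believing it will always succeed is wrong.
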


By Theorem \ref{expnil} it suffices to show that all rational Lie algebras of dimension $\leq 6$ have a positive grading and these Lie algebras have been classified, e.g.\ in \cite{degr07-1}. Let $\lien$ be a nilpotent Lie algebra of nilpotency class $\leq 2$ and consider the subspace $\lien_2 = \gamma_2(\lien) = [\lien,\lien]$. By taking any subspace $\lien_1$ such that $\lien = \lien_1 \oplus \lien_2$ as a vector space, we find a positive grading for $\lien$. So all Lie algebras of nilpotency class $2$ have a positive grading. By the work in \cite{dip03-1} we know that all $2$-generated $4$-step nilpotent and all $3$-generated $3$-step nilpotent Lie algebras have a positive grading as well. So only a few Lie algebras of \cite{degr07-1} are left to check and it's an easy exercise to construct positive gradings on these by hand. 

\smallskip

Thus the minimal dimension of a nilmanifold without non-trivial self-cover is $7$. In \cite[Example 2.5.]{bele03-1}, there is a $7$-dimensional example of nilpotency class $6$ and a natural question is for which nilpotency classes this minimal dimension can be obtained. We have the following example in nilpotency class $5$:
\begin{Ex}
\label{nilp5}
Let $\lien$ be the $5$-step nilpotent rational Lie algebra with basis $X_1, X_2, \ldots, X_7$ and Lie bracket given by 
\begin{align*}
\left[X_1,X_2 \right] &= X_3  &\left[X_1,X_5 \right] &= X_7 &\left[X_2,X_4\right] &= X_7\\
\left[X_1,X_3 \right] &= X_4  &\left[X_1,X_6 \right] &= X_7 &\left[X_2,X_5\right] &= X_6\\
\left[X_1,X_4 \right] &= X_7  &\left[X_2,X_3\right] &= X_5  &\left[X_3,X_5\right] &= X_7.
\end{align*}
A computation shows that $\lien$ is characteristically nilpotent, so every full subgroup of the corresponding radicable hull $N^\Q$ is co-Hopfian. 
\end{Ex}
\noindent Unfortunately, there is only a classification of complex Lie algebras of dimension $7$ (see e.g.\ \cite{magn95-1}), but no classification over $\Q$. Since all complex nilpotent Lie algebras of dimension $7$ of nilpotency class $3$ or $4$ are positively graded, it seems likely that Example \ref{nilp5} has minimal nilpotency class. For nilpotency class $3$ and $4$, there are $8$-dimensional examples of Lie algebras which are characteristically nilpotent.

Another interesting class of examples are Lie algebras without expanding automorphisms, but which do have a non-negative and non-trivial grading. The minimal dimension of such an example is again $7$, because of Theorem \ref{dimension}. 
\begin{Ex}
\label{notcohopf}
Consider the Lie algebra $\lien$ with basis $X_1, \ldots, X_7$ and Lie bracket:
\begin{align*}
\left[X_1,X_2 \right] &= X_3  &\left[X_2,X_3 \right] &= X_5 &\left[X_2,X_6\right] &= X_7\\
\left[X_1,X_3 \right] &= X_4  &\left[X_2,X_4 \right] &= X_6 &\left[X_3,X_5\right] &= -X_7\\
\left[X_1,X_5 \right] &= X_6  &\left[X_2,X_5 \right] &= X_7.
\end{align*}
As an exercise, one can check that $\lien$ has no expanding automorphisms. But the map $\varphi: \lien \to \lien$ defined by
\begin{align*}
\varphi(X_1) &= X_1  & \varphi(X_5) &= 4X_5 \\
\varphi(X_2) &= 2X_2  & \varphi(X_6) &= 4X_6 \\
\varphi(X_3) &= 2X_3  & \varphi(X_7) &= 8X_7 \\
\varphi(X_4) &= 2X_4
\end{align*}
is an automorphism of $\lien$ with characteristic polynomial in $\Z[X]$ and $\det(\varphi) = 2^{10} >0$.
\end{Ex}
Again, if all $7$-dimensional rational Lie algebras of nilpotency class $3$ and $4$ are graded, this is an example of minimal nilpotency class. 

\smallskip

We present a general way of constructing nilmanifolds with various properties starting from an arbitrary $\Q$-group. Let $\lien$ be a rational nilpotent Lie algebra and consider the projection map $\pi: \Aut(\lien) \to \Aut\left(\faktor{\lien}{[\lien,\lien]}\right)$, with $ \Aut\left(\faktor{\lien}{[\lien,\lien]}\right) \approx \GL(n,\Q)$ for some $n \in \N$. The kernel of $\pi$ consists of unipotent automorphisms, see \cite{gs84-1}, and the image $H$ is a $\Q$-group of $\GL(n,\Q)$.  We don't go into details about $\Q$-groups, but the reader can find more information in \cite{bore91-1}. 

Let $\varphi \in \Aut(\lien)$ be an automorphism and denote by $\lambda_1, \ldots, \lambda_n$ the eigenvalues of $\pi(\varphi)$. Every eigenvalue $\mu$
of $\varphi$ can then be written as an $i$-fold product $\lambda_{j_1} \ldots \lambda_{j_i}$ for some $i \in \{1, \ldots, c\}$, where $c$ is the nilpotency class of $\lien$. Thus $\varphi$ is an expanding automorphism if and only if $\pi(\varphi)$ is an expanding automorphism. So a nilmanifold admits an expanding map if and only if the $\Q$-group $H$ of the corresponding Lie algebra $\lien$ has an expanding automorphism. Similarly, the existence of a non-trivial self-cover is equivalent to the existence of an element in $H$ with characteristic polynomial in $\Z[X]$ and determinant $> 1$ in absolute value. 

Starting from a $\Q$-group $H \subseteq \GL(n,\Q)$ with $n \geq 2$, \cite{bg86-1} shows that there always exists a rational Lie algebra $\lien$, generated by $n$ elements, such that $H$ is the image of the projection map $\pi$. So to construct nilmanifolds without non-trivial self-covers, it suffices to construct a $\Q$-group $H$ not containing elements with characteristic polynomial in $\Z[X]$ and determinant $> 1$ in absolute value:

\begin{Ex}
Consider the $\Q$-closed group \begin{align*}
H = \left\{  \begin{pmatrix}q & 0 & 0\\0 & q & 0\\0 & 0 & q^{-1}  \end{pmatrix} \mid q \in \Q \right\}
\end{align*}
and $\lien$ a rational Lie algebra with $H$ as image of the projection map $\pi$. Only the identity element of $H$ has characteristic polynomial in $\Z[X]$, so every nilmanifold with Lie algebra $\lien$ is co-Hopfian. Note that $\lien$ does have automorphisms with determinant $\neq 1$ in absolute value, so this is a new type of example for co-Hopfian $\mathcal{F}$-groups.
\end{Ex}

\begin{Ex}
Consider the $\Q$-closed group \begin{align*}
H = \left\{  \begin{pmatrix}1 & 0\\0 & q\end{pmatrix}\mid q \in \Q  \right\}
\end{align*}
and $\lien$ a rational Lie algebra with $H$ as image of the projection map $\pi$. Every element of $H$ has eigenvalue $1$, so $\lien$ doesn't admit an expanding automorphism. There does exists an automorphism with only eigenvalues in $\Z$ and determinant $>1$ in absolute value, for example any automorphism $\varphi$ with $\pi(\varphi) = \begin{pmatrix} 1 & 0\\0 & 2\end{pmatrix}$. So every nilmanifold with Lie algebra $\lien$ has non-trivial self-covers but no expanding maps. In fact, the Lie algebra of Example \ref{notcohopf} is an explicit example of a Lie algebra with image of $\pi$ equal to $H$.
\end{Ex}

\smallskip

Every known example of an infra-nilmanifold with an Anosov diffeomorphism also admits an expanding map. From the algebraic characterization, there is no reason why having an Anosov diffeomorphism would imply the existence of an expanding map. But since examples of nilmanifolds with Anosov diffeomorphisms are hard to construct (see for example \cite{dere13-1}), it is not easy to give an example of an infra-nilmanifold with Anosov diffeomorphism and not admitting an expanding map although we believe that such examples exist. A problem with the construction above starting from a $\Q$-group $H$ is that the existence of an Anosov diffeomorphism doesn't depend solely on the group $H$. An example would be easier to construct if the conjectures formulated in Section \ref{algexp} above are true.

\bibliography{G:/algebra/ref}

\begin{thebibliography}{10}

\bibitem{bele03-1}
Belegradek, I.
\newblock {\em On co-{H}opfian nilpotent groups}.
\newblock Bull. London Math. Soc., 2003, 35 6, 805--811.

\bibitem{bore91-1}
Borel, A.
\newblock {\em Linear algebraic groups}, volume 126 of {\em Graduate Texts in
  Mathematics}.
\newblock Springer-Verlag, second edition, 1991.

\bibitem{bg86-1}
Bryant, R.~M. and Groves, J. R.~J.
\newblock {\em Algebraic groups of automorphisms of nilpotent groups and {L}ie
  algebras}.
\newblock J. London Math. Soc. (2), 1986, 33 3, 453--466.

\bibitem{degr07-1}
de~Graaf, W.~A.
\newblock {\em Classification of 6-dimensional nilpotent {L}ie algebras over
  fields of characteristic not 2}.
\newblock J. Algebra, 2007, 309 2, 640--653.

\bibitem{deki96-1}
Dekimpe, K.
\newblock {\em {A}lmost-{B}ieberbach {G}roups: {A}ffine and Polynomial
  Structures}, volume 1639 of {\em Lect. Notes in Math.}
\newblock Springer--Verlag, 1996.

\bibitem{deki99-1}
Dekimpe, K.
\newblock {\em Hyperbolic automorphisms and {A}nosov diffeomorphisms on
  nilmanifolds}.
\newblock Trans. Amer. Math. Soc., 2001, 353 7, pp.\ 2859--2877.

\bibitem{dd13-1}
Dekimpe, K. and Der\'{e}, J.
\newblock {\em Existence of Anosov diffeomorphisms on infra-nilmanifolds
  modeled on free nilpotent Lie groups}.
\newblock arXiv:1304.6529.

\bibitem{dip03-1}
Dekimpe, K., Igodt, P., and Pouseele, H.
\newblock {\em Expanding Automorphisms and Affine Structures on Nilpotent Lie
  Algebras with Few Generators.}
\newblock Comm. Algebra, 2003, 31 (12), pp. 5847--5874.

\bibitem{dl03-2}
Dekimpe, K. and Lee, K.~B.
\newblock {\em Expanding maps, {A}nosov diffeomorphisms and affine structures
  on infra-nilmanifolds}.
\newblock Topology Appl., 2003, 130 3, 259--269.

\bibitem{dl03-1}
Dekimpe, K. and Lee, K.~B.
\newblock {\em Expanding maps on infra-nilmanifolds of homogeneous type}.
\newblock Trans. Amer. Math. Soc., 2003, 355 (3), pp. 1067--1077.

\bibitem{dv09-1}
Dekimpe, K. and Verheyen, K.
\newblock {\em Anosov diffeomorphisms on nilmanifolds modeled on a free
  nilpotent Lie group}.
\newblock Dynamical Systems -- an international journal, 2009, 24 1, pp.
  117--121.

\bibitem{dv11-1}
Dekimpe, K. and Verheyen, K.
\newblock {\em Constructing infra-nilmanifolds admitting an {A}nosov
  diffeomorphism}.
\newblock Adv. Math., 2011, 228 6, 3300--3319.

\bibitem{dere13-1}
Der\'{e}, J.
\newblock {\em New examples of Anosov diffeomorphisms on nilmanifolds}.
\newblock arXiv:1312.2872.

\bibitem{grom81-1}
Gromov, M.
\newblock {\em Groups of polynomial growth and expanding maps}.
\newblock Institut des Hautes \'Etudes Scientifiques, 1981,  53, pp. 53--73.

\bibitem{gs84-1}
Grunewald, F. and Segal, D.
\newblock Reflections on the classification of torsion-free nilpotent groups.
\newblock In {\em Group theory}, pages 121--158. Academic Press, London, 1984.

\bibitem{magn95-1}
Magnin, L.
\newblock {\em Adjoint and Trivial Cohomology Tables for Indecomposable
  Nilpotent Lie Algebras of Dimension $\le 7$ over $\C$}.
\newblock Electronic Book, 1995.

\bibitem{mann74-1}
Manning, A.
\newblock {\em There are no new {A}nosov diffeomorphisms on tori}.
\newblock Amer. J. Math., 1974, 96 3, pp. 422--429.

\bibitem{newh70-1}
Newhouse, S.~E.
\newblock {\em On codimension one {A}nosov diffeomorphisms}.
\newblock Amer. J. Math., 1970, 92 761--770.

\bibitem{sega83-1}
Segal, D.
\newblock {\em Polycyclic {G}roups}.
\newblock Cambridge University Press, 1983.

\bibitem{smal67-1}
Smale, S.
\newblock {\em Differentiable dynamical systems}.
\newblock Bull. Amer. Math. Soc., 1967, 73, pp. 747--817.

\end{thebibliography}
\bibliographystyle{G:/algebra/ref}

\end{document}